\newtheorem{theorem}{Theorem}[section]
\theoremstyle{plain}
\newtheorem{corollary}[theorem]{Corollary}
\newtheorem{definition}[theorem]{Definition}
\newtheorem{lemma}[theorem]{Lemma}
\newtheorem{proposition}[theorem]{Proposition}
\newtheorem{remark}[theorem]{Remark}
\numberwithin{equation}{section}
\title[]{Exceptional Boundary Sets for Solutions of Fully Nonlinear Parabolic PDEs}
\author{Ram Baran Verma}
\address[Ram Baran Verma]{Department of Mathematics, SRM University Amaravati, Andhra Pradesh-522502, India}
\email{rambaran.v@srmap.edu.in}
\author{Mohan Mallick}
\address[Mohan Mallick]{Visvesvaraya National Institute Of Technology, India-522502}
\email{mohan.math09@gmail.com, mohanmallick@mth.vnit.ac.in}
\subjclass[2010]{Primary: 35K10, 35K20; Secondary: 35K10, 35K20}
\keywords{Nonlinear parabolic operator, Phragmén-Lindelöf principle, exceptional boundary set, Hausdorff dimension}
\date{}
\begin{document}

\begin{abstract}
This article investigates the exceptional set of the boundary for the following problem:
\begin{equation*}\label{excep}
\begin{aligned}
-\frac{\partial u}{\partial t} + \mathcal{M}_{\lambda,\Lambda}^+(D^2u) + b(x,t)\cdot Du + c(x,t)u =0 \quad \rm{in} ~ \Omega_{T},
\end{aligned}
\end{equation*}
We provide a sufficient condition on the exceptional set in terms of the bound of the Hausdorff measure of this boundary portion. This condition ensures that even if the boundary values are not nonnegative on this portion, the supersolution remains nonnegative.
\end{abstract}

\maketitle

\section{Introduction}

In this article, we consider the following parabolic problem:
\begin{equation}\label{excep}
\begin{aligned}
-\frac{\partial u}{\partial t} + \mathcal{M}_{\lambda,\Lambda}^+(D^2u) + b(x,t) \cdot Du + c(x,t)u = 0 \quad \text{in} \ \Omega_{T},
\end{aligned}
\end{equation}
where $\Omega_{T} = \Omega \times (0, T)$ and $\Omega \subset \mathbb{R}^{n}$. The function $b(x,t): \Omega_{T} \rightarrow \mathbb{R}^{n}$ is a bounded vector field, and $c$ is a bounded function. The precise conditions on $b$ and $c$ will be given below (see $(A1)$). $\mathcal{M}_{\lambda,\Lambda}^+$ is Pucci's extremal operator defined by \eqref{pucci}.

The term "exceptional sets" appears in various contexts and generally refers to situations where certain properties are retained if we ignore the "exceptional" set. One of the problems related to exceptional sets is measuring their "size." In the context of partial differential equations or related areas, size usually means a certain measure (Lebesgue, Hausdorff) or, more generally, capacity. For an introduction to terms like Hausdorff measure and capacity, we refer to \cite{evans2018measure, heinonen2018nonlinear}. A more detailed analysis of their relation to exceptional sets and related problems can be found in \cite{carleson1967selected}. For the exceptional set of a Sobolev function and its relation to capacity, see Section 2.42 in \cite{heinonen2018nonlinear}.

This article deals with the uniqueness of the solution to \eqref{excep}. The parabolic boundary $\Omega_{T}$ consists of two parts: the base and the lateral portion of the boundary. We will provide sufficient conditions for the exceptional sets of the base and lateral parts of the boundary separately. As discussed above, an exceptional set in this context is a subset of the boundary such that if we change the boundary values on these sets, the corresponding solution remains unchanged. For divergence form operators, the exceptional set of the boundary and the corresponding uniqueness has been discussed in \cite{gauidenko1981exceptional} and Chapter 6 of \cite{chabrowski2006dirichlet}. For linear elliptic and parabolic equations of the non-divergence form, we refer to \cite{hile1986exceptional} and \cite{hile1988exceptional}, respectively. Here, we consider the fully nonlinear operator of the non-divergence form.

This type of problem is closely related to the results of Zaremba and Phragmén-Lindelöf, which state that if $u$ is subharmonic in a domain, even though 
\[\lim_{y \rightarrow x} \inf_{y \in \Omega} u(y) \geq 0\]
fails to hold at some (single) point of the boundary, the solution is still non-negative in the whole domain. For results related to Zaremba, we refer to Theorem 3.6.29 in \cite{helms2009potential}. It is noteworthy that the Phragmén-Lindelöf theorem was first established by D. Gilbarg and E. Hopf for the Laplace equation in two and arbitrary finite dimensions, respectively (see \cite{gilbarg1952phragmen, hopf1952remarks}). We also refer to \cite{oddson1968some, oddson1969phragmen}. Later, Miller obtained the Phragmén-Lindelöf theorem for general uniformly elliptic equations without the continuity assumption on the coefficients in \cite{miller1967barriers}. For a nice introduction and development of this topic, we refer to \cite{protter2012maximum}. In the context of fully nonlinear elliptic equations, this result was obtained by I. C. Dolcetta and A. Vitolo \cite{dolcetta2007qualitative}. This result has been extended to the case of unbounded coefficients in \cite{punzo2010phragmen}. Recently, this result has also been extended to fully nonlinear parabolic equations with unbounded measurable coefficients \cite{tateyama2020phragmen}.

Here, our interest is to extend the result of Hile and Yeh \cite{hile1988exceptional} in the context of fully nonlinear parabolic equations, which concerns not only a single point but a subset of the boundary. We have obtained a sufficient condition in terms of the Hausdorff measure of the exceptional set, so that if the boundary values are changed on this subset, the solution remains unchanged. The Hausdorff dimension in this case will be $(n-1)$. We will see that for the base boundary, the upper bound of the Hausdorff measure depends only on the ellipticity constant, while for the lateral boundary, it depends on the size of the singular barrier constructed by Miller. For more details, see Section 3.

This article is organized as follows. In Section 2, we have collected some elementary definitions and results that will be used throughout this article. Section 3 is devoted to the proof of the main result in the case of the base boundary. In the last section, we construct the singular barrier following the approach of Miller and prove the main result of the article in the case of the lateral boundary.
\section{Auxiliary Notation and Definition}

In this section, we collect definitions and results that will be used throughout the article. For given constants $0 < \lambda < \Lambda$, Pucci's extremal operators are defined as follows:
\begin{equation}\label{pucci}
\mathcal{M}^{\pm}_{\lambda,\Lambda}(M) = \Lambda \sum_{\pm e_{i} > 0} e_{i} + \lambda \sum_{\pm e_{i} < 0} e_{i},
\end{equation}
where $M$ is a symmetric matrix of size $N \times N$ and $e_i$ are its eigenvalues. In the definition of Pucci's extremal operator, we need to compute the eigenvalues of the Hessian $D^{2}u$ of the function. In general, it is very difficult to compute the eigenvalues of the Hessian. However, if the function $u$ is radial, then the eigenvalues of $D^{2}u$ are given by the following lemma.

\begin{lemma}[Lemma 3.1 \cite{AF1}]\label{radial}
Let $\tilde{u}: [0, \infty) \rightarrow \mathbb{R}$ be a $C^{2}$ function such that $u(x) = \tilde{u}(|x|)$. Then for any $x \in \mathbb{R}^{N} \setminus \{0\}$, the eigenvalues of the Hessian $D^{2}u(x)$ are $\frac{\tilde{u}'(|x|)}{|x|}$ with multiplicity $N-1$ and $\tilde{u}''(|x|)$ with multiplicity 1.
\end{lemma}

Although the definition of the Hausdorff measure and Hausdorff dimension of a set is well known, we recall it here for the sake of completeness.

\begin{definition}
Let $A \subset \mathbb{R}^{n}$ and $0 \leq s < \infty$. We define the $s$-dimensional Hausdorff measure to be:
\[
\mathscr{H}^{s}(A) := \lim_{\delta \rightarrow 0} \mathscr{H}_{\delta}^{s}(A) = \sup_{\delta > 0} \mathscr{H}_{\delta}^{s},
\]
where
\[
\mathscr{H}_{\delta}^{s}(A) := \inf \left\{\sum_{i=1}^{\infty} \alpha(s) \left(\frac{\text{Diam} \ C_{i}}{2}\right)^{s} \mid A \subset \bigcup_{i=1}^{\infty} C_{i}, \ \text{Diam}(C_{i}) \leq \delta \right\},
\]
and
\[
\alpha(s) = \frac{\pi^{\frac{s}{2}}}{\Gamma(\frac{s}{2}+1)}.
\]
\end{definition}

We also recall a simple corollary which will be of use.

\begin{lemma}\label{house}
Let $E$ be a subset of $\mathbb{R}^n$ with Hausdorff dimension $\mathscr{H}(E)$. If $\mu$ is a positive number such that $\mathscr{H}(E) < \mu$, then for any $\varepsilon > 0$ and $\nu > 0$, there exists a countable covering of $E$ by open balls $\{B_i\}$ in $\mathbb{R}^n$ such that the center of each $B_i$ is a point of $E$, each radius $r_i$ of $B_i$ satisfies $0 < r_i \leq \nu$, and moreover, $\sum_i r_i^\mu < \varepsilon$.
\end{lemma}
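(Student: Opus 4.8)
The plan is to convert the hypothesis on the Hausdorff dimension into the vanishing of the $\mu$-dimensional Hausdorff measure, extract an efficient covering directly from the definition of $\mathscr{H}^{\mu}_{\delta}$, and then inflate its members into open balls centred on $E$ while keeping control of the radii and of $\sum_i r_i^{\mu}$.

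First I would invoke the standard characterization of the Hausdorff dimension, $\mathscr{H}(E) = \inf\{ s \geq 0 : \mathscr{H}^{s}(E) = 0 \}$. Since $\mathscr{H}(E) < \mu$ by assumption, this yields $\mathscr{H}^{\mu}(E) = 0$, hence $\mathscr{H}^{\mu}_{\delta}(E) = 0$ for every $\delta > 0$. Now fix $\varepsilon > 0$ and $\nu > 0$, put $\delta := \nu/2$ and $\eta := \alpha(\mu)\,\varepsilon/(2\cdot 4^{\mu})$ (note $\alpha(\mu) > 0$). Since $\mathscr{H}^{\mu}_{\delta}(E) = 0 < \eta$, the definition of $\mathscr{H}^{\mu}_{\delta}$ supplies a countable family $\{C_i\}$ with $E \subset \bigcup_i C_i$, $\mathrm{Diam}\,C_i \leq \delta$ for all $i$, and $\sum_i \alpha(\mu)(\mathrm{Diam}\,C_i/2)^{\mu} < \eta$. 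Dropping those $C_i$ that miss $E$ changes nothing, so I may assume $C_i \cap E \neq \emptyset$ and choose $x_i \in C_i \cap E$ for each $i$.

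Next I would pass to open balls. Pick auxiliary radii $\rho_i \in (0,\nu]$ with $\sum_i \rho_i^{\mu} < \varepsilon/2$ — for instance $\rho_i = c\,2^{-i/\mu}$ with $c \in (0,\nu]$ small — and set $r_i := \max\{2\,\mathrm{Diam}\,C_i,\ \rho_i\}$, $B_i := B(x_i, r_i)$. Each $B_i$ is then an open ball with centre $x_i \in E$ and $0 < r_i \leq \max\{2\delta, \nu\} = \nu$; moreover, since $x_i \in C_i$ we have $|y - x_i| \leq \mathrm{Diam}\,C_i < r_i$ for every $y \in C_i$ (also when $\mathrm{Diam}\,C_i = 0$), so $C_i \subset B_i$ and $E \subset \bigcup_i B_i$. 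Finally, using $\max\{a,b\}^{\mu} = \max\{a^{\mu},b^{\mu}\} \leq a^{\mu}+b^{\mu}$ and $(2\,\mathrm{Diam}\,C_i)^{\mu} = 4^{\mu}(\mathrm{Diam}\,C_i/2)^{\mu}$,
\[
\sum_i r_i^{\mu} \;\leq\; 4^{\mu}\sum_i \Big(\frac{\mathrm{Diam}\,C_i}{2}\Big)^{\mu} + \sum_i \rho_i^{\mu} \;<\; \frac{4^{\mu}\eta}{\alpha(\mu)} + \frac{\varepsilon}{2} \;=\; \varepsilon ,
\]
which is the asserted bound.

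There is no genuine difficulty here: the substance is the definitional implication $\mathscr{H}(E)<\mu \Rightarrow \mathscr{H}^{\mu}(E)=0$. The only points requiring a little attention are the reduction from arbitrary covering sets to open balls (handled by the inflation factor $2$ and the safety radii $\rho_i$, which also dispose of possibly degenerate $C_i$ of zero diameter and guarantee openness) and the bookkeeping of the constants $\delta$ and $\eta$, chosen so that $r_i \leq \nu$ and $\sum_i r_i^{\mu} < \varepsilon$ hold simultaneously.
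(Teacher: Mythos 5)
Your argument is correct and complete. The paper itself states Lemma \ref{house} without proof (it is presented as a ``simple corollary,'' essentially quoted from Hile--Yeh), so there is nothing to compare against; your write-up supplies exactly the standard argument that is being implicitly invoked. The two genuinely non-trivial bookkeeping points are both handled properly: (i) the passage from $\mathscr{H}(E)<\mu$ to $\mathscr{H}^{\mu}_{\delta}(E)=0$, which lets you extract a covering by arbitrary sets of small diameter with $\sum_i \alpha(\mu)(\mathrm{Diam}\,C_i/2)^{\mu}$ as small as you like; and (ii) the replacement of those sets by \emph{open} balls \emph{centered on} $E$, where the inflation factor $2$ guarantees $C_i\subset B_i$ even for boundary points at distance exactly $\mathrm{Diam}\,C_i$ from the chosen center, and the auxiliary radii $\rho_i$ dispose of singleton (zero-diameter) covering sets while costing only an extra $\varepsilon/2$ in the sum. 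The constants $\delta=\nu/2$ and $\eta=\alpha(\mu)\varepsilon/(2\cdot 4^{\mu})$ are chosen consistently with the paper's normalized definition of $\mathscr{H}^{s}_{\delta}$, so both conclusions $r_i\le\nu$ and $\sum_i r_i^{\mu}<\varepsilon$ follow as claimed.
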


\begin{lemma}[Comparison Principle]
Consider the equation
\begin{equation}
-\frac{\partial u}{\partial t} + \mathcal{M}_{\lambda,\Lambda}^+(D^2u) + b(x,t) \cdot Du + c(x,t)u = f(x,t) \quad \text{in} \ \Omega_{T}.
\end{equation}
Suppose that $u$ is a subsolution and $v$ is a supersolution such that $u \leq v$ on $\partial_{\mathscr{P}} \Omega_{T}$. Then, $u \leq v$ in $\Omega_{T}$.
\end{lemma}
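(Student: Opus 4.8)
The plan is to prove this as the standard viscosity comparison principle for a proper, degenerate-elliptic parabolic equation, via the Crandall--Ishii doubling-of-variables method; the two features of $\mathcal{M}^+_{\lambda,\Lambda}$ that make the scheme work are its (uniform) ellipticity, so that $X\le Y$ implies $\mathcal{M}^+_{\lambda,\Lambda}(X)\le\mathcal{M}^+_{\lambda,\Lambda}(Y)$, and its positive $1$-homogeneity, $\mathcal{M}^+_{\lambda,\Lambda}(\rho M)=\rho\,\mathcal{M}^+_{\lambda,\Lambda}(M)$ for $\rho>0$.

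I would begin with two reductions. To arrange $c\le 0$: using the positive $1$-homogeneity, if $u$ (resp.\ $v$) is a sub- (resp.\ super-) solution then $e^{-Kt}u$ (resp.\ $e^{-Kt}v$) is a sub- (resp.\ super-) solution of the equation with $c$ replaced by $c-K$ and $f$ by $e^{-Kt}f$; choosing $K\ge\|c\|_{L^\infty}$ makes the new zeroth-order coefficient nonpositive, and $e^{-Kt}u\le e^{-Kt}v$ on $\partial_{\mathscr{P}}\Omega_T$ (resp.\ in $\Omega_T$) is equivalent to $u\le v$ there, so we may assume $c\le 0$. Next, for $\eta>0$ put $u^\eta:=u-\frac{\eta}{T-t}$; a short test-function computation gives that $u^\eta$ is a strict subsolution,
\[
-\partial_t u^\eta+\mathcal{M}^+_{\lambda,\Lambda}(D^2u^\eta)+b\cdot Du^\eta+cu^\eta\ \ge\ f+\frac{\eta}{(T-t)^2}\ \ge\ f+\frac{\eta}{T^2}
\]
in the viscosity sense (using $c\le 0$ to discard $-c\eta/(T-t)\ge 0$), and $u^\eta\to-\infty$ as $t\to T^-$. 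Since $u^\eta\le u\le v$ on $\partial_{\mathscr{P}}\Omega_T$, it suffices to show $u^\eta\le v$ in $\Omega_T$ for each $\eta>0$ and let $\eta\downarrow 0$.

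Suppose, for contradiction, that $M:=\sup_{\overline{\Omega_T}}(u^\eta-v)>0$. As $u^\eta-v$ is upper semicontinuous, is $\le 0$ on $\partial_{\mathscr{P}}\Omega_T$, and $\to-\infty$ as $t\to T$, this supremum is attained at some $(\hat x,\hat t)\in\Omega\times(0,T)$. For $\varepsilon>0$ I would maximize
\[
\Phi_\varepsilon(x,t,y,s):=u^\eta(x,t)-v(y,s)-\frac{|x-y|^2}{2\varepsilon}-\frac{|t-s|^2}{2\varepsilon}
\]
over $\overline{\Omega_T}\times\overline{\Omega_T}$ (adding a localizing term $\beta(|x-\hat x|^2+|t-\hat t|^2)$ if $\Omega$ is unbounded), at a point $(x_\varepsilon,t_\varepsilon,y_\varepsilon,s_\varepsilon)$; the usual penalization estimates yield $(x_\varepsilon,t_\varepsilon),(y_\varepsilon,s_\varepsilon)\to(\hat x,\hat t)$, $\frac{1}{\varepsilon}(|x_\varepsilon-y_\varepsilon|^2+|t_\varepsilon-s_\varepsilon|^2)\to 0$, $u^\eta(x_\varepsilon,t_\varepsilon)\to u^\eta(\hat x,\hat t)$, $v(y_\varepsilon,s_\varepsilon)\to v(\hat x,\hat t)$, and, for $\varepsilon$ small, all these points lie in $\Omega\times(0,T)$. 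Then I would invoke the parabolic theorem on sums (the parabolic Jensen--Ishii lemma): there exist $\tau\in\mathbb{R}$ and symmetric matrices $X\le Y$ with $(\tau,p_\varepsilon,X)\in\overline{\mathcal{P}}^{2,+}u^\eta(x_\varepsilon,t_\varepsilon)$ and $(\tau,p_\varepsilon,Y)\in\overline{\mathcal{P}}^{2,-}v(y_\varepsilon,s_\varepsilon)$, where $p_\varepsilon=(x_\varepsilon-y_\varepsilon)/\varepsilon$ and the time entries coincide because the time penalization is symmetric, and where $\left(\begin{smallmatrix}X&0\\0&-Y\end{smallmatrix}\right)\le\frac{3}{\varepsilon}\left(\begin{smallmatrix}I&-I\\-I&I\end{smallmatrix}\right)$, which forces $X\le Y$. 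Writing the strict subsolution inequality for $u^\eta$ at $(x_\varepsilon,t_\varepsilon)$ against the test triple $(\tau,p_\varepsilon,X)$ and the supersolution inequality for $v$ at $(y_\varepsilon,s_\varepsilon)$ against $(\tau,p_\varepsilon,Y)$, and subtracting, the terms $-\tau$ cancel, $\mathcal{M}^+_{\lambda,\Lambda}(X)-\mathcal{M}^+_{\lambda,\Lambda}(Y)\le 0$ by ellipticity, $(b(x_\varepsilon,t_\varepsilon)-b(y_\varepsilon,s_\varepsilon))\cdot p_\varepsilon\to 0$ by the structure/Lipschitz hypothesis on $b$ and the penalization bounds, $f(x_\varepsilon,t_\varepsilon)-f(y_\varepsilon,s_\varepsilon)\to 0$ by continuity, and $c(x_\varepsilon,t_\varepsilon)u^\eta(x_\varepsilon,t_\varepsilon)-c(y_\varepsilon,s_\varepsilon)v(y_\varepsilon,s_\varepsilon)\to c(\hat x,\hat t)M\le 0$ since $c\le 0$ and $M>0$; letting $\varepsilon\to 0$ leaves $0\ge\eta/T^2>0$, a contradiction. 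Hence $u^\eta\le v$ in $\Omega_T$, and $\eta\downarrow 0$ gives $u\le v$.

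The step I expect to be the real obstacle is the rigorous application of the parabolic theorem on sums together with the control of the lower-order terms, which is exactly what pins down the precise form of $(A1)$: to get $(b(x_\varepsilon,t_\varepsilon)-b(y_\varepsilon,s_\varepsilon))\cdot p_\varepsilon\to 0$ one wants $b$ Lipschitz (equivalently, the operator to obey the Crandall--Ishii--Lions structure condition). If $(A1)$ only gives bounded \emph{measurable} $b$ and $c$, this scheme must instead be carried out in the $L^n$-viscosity setting, replacing test functions by sup/inf-convolutions and the pointwise use of ellipticity by the Aleksandrov--Bakelman--Pucci maximum principle; in that case I would simply quote the comparison theorem of Caffarelli--Crandall--Kocan--Swiech (or Crandall--Kocan--Swiech) rather than reprove it. The remaining pieces --- the two reductions, the penalization estimates, and the localization needed when $\Omega$ is unbounded or $u,v$ are only assumed bounded on the appropriate side --- are standard.
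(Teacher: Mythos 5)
The paper states this comparison principle without proof, so there is no argument of the authors' to compare yours against; what matters is whether your route is appropriate to the paper's setting. Your viscosity-solutions scheme (reduction to $c\le 0$, strict subsolution $u-\eta/(T-t)$, doubling of variables, parabolic theorem on sums) is the standard and essentially correct proof for continuous viscosity sub/supersolutions, and you correctly isolate the one genuinely problematic step: controlling $(b(x_\varepsilon,t_\varepsilon)-b(y_\varepsilon,s_\varepsilon))\cdot p_\varepsilon$ requires $b$ Lipschitz (or the Crandall--Ishii--Lions structure condition), whereas assumption (A1) only gives the bound $|b(x,t)|\le b_0(t)$ with no modulus of continuity in $x$. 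So as written your proof is conditional on a hypothesis the paper does not grant, and your fallback is to quote the $L^n$-viscosity comparison theorem. But note that everywhere this lemma is actually used (Theorem \ref{bmain} and Theorem \ref{sinb}), the functions involved are classical, of class $C^{2,1}$: $u$ is assumed $C^{2,1}$ and the barriers $\phi$, $\psi$, $h^{r}$, $h^{s}$ are explicit smooth functions. In that setting the elementary classical argument both suffices and sidesteps the $b$-regularity issue entirely: set $w=u-v-\eta/(T-t)$, suppose $w$ attains a positive maximum at an interior point $(x_0,t_0)$; there $Dw=0$ (so the drift term vanishes exactly, no matter how rough $b$ is), $D^2w\le 0$, $\partial_t w\ge 0$, and subadditivity of the Pucci operator gives
\[
\mathcal{M}^{+}_{\lambda,\Lambda}(D^2u)-\mathcal{M}^{+}_{\lambda,\Lambda}(D^2v)\le \mathcal{M}^{+}_{\lambda,\Lambda}(D^2u-D^2v)\le 0 ,
\]
while $c\,w\le 0$ since $c\le 0$; subtracting the sub- and supersolution inequalities then contradicts the strict term $\eta/(T-t)^2>0$. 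I would recommend you present that two-line classical argument as the proof matching the paper's hypotheses, and keep the Jensen--Ishii machinery (or the citation to Crandall--Ishii--Lions / Caffarelli--Crandall--Kocan--\'Swi\k{e}ch) only as a remark on how the statement extends to non-classical solutions.
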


\begin{remark}
Through this article we will be assuming $c\leq 0.$
\end{remark}

\begin{remark}
If $(x_{0}, t_{0})$ is a point on $\partial_{P} \Omega_{T}$ and $u$ is a real-valued function in $\Omega_{T}$, by the phrase $u \geq 0$ at $(x_{0}, t_{0})$, we mean that $\lim \inf u(x, t) \geq 0$ as $(x, t)$ approaches $(x_{0}, t_{0})$ from inside $\Omega_{T}$. If $\mathscr{D}$ is a subset of $\partial \Omega_{T}$, by $u \geq 0$ on $\mathscr{D}$ we mean that $u \geq 0$ at every point $(x, t)$ on $\mathscr{D}$. If $u \geq 0$ on the intersection of $\partial \Omega_{T} - \mathscr{D}$ with a neighborhood of $(x, t)$ (or of $\mathscr{D}$), then we say $u \geq 0$ on $\partial \mathscr{D} - E$ near $(x, t)$ (or, respectively, near $E$).
\end{remark}
\section{Main Result in the Case of the Base Boundary}

In this section, we prove Theorem \ref{bmain}, which asserts a sufficient condition in terms of the Hausdorff measure of the exceptional set. To prove this theorem, we need several elementary results as discussed in \cite{hile1988exceptional}.

Consider the problem:
\begin{equation}\label{excep}
\begin{aligned}
-\frac{\partial u}{\partial t} + \mathcal{M}_{\lambda,\Lambda}^+(D^2u) + b(x,t) \cdot Du + c(x,t)u = 0 \quad \text{in} \ \Omega_{T},
\end{aligned}
\end{equation}
where $\Omega_{T} = \Omega \times (0,T)$, and $b(x,t) = (b_1(x,t), b_2(x,t), \dots, b_n(x,t))$, and $c(x,t)$ are functions defined on $\Omega_{T}$. The basic assumptions on the coefficients $b$ and $c$ are as follows:
\begin{enumerate}
\item[(A1)] There are two non-negative functions $b_0$ and $c_0$ defined on $(0,T)$ such that for all $(x,t) \in \Omega_{T}$, we have
\[|b(x,t)| \leq b_{0}(t) = o(t^{-\frac{1}{2}}) \quad \text{and} \quad |c(x,t)| \leq c_{0}(t) = o(t^{1-\beta}),\]
for some $0 < \beta < 1$. Observe that this condition also implies $c_{0}(t) = o(t^{-1})$ as $t \rightarrow 0$.
\end{enumerate}

\begin{lemma}
Given two positive constants $\alpha$ and $\sigma$ satisfying
\begin{equation}\label{ineqa}
0 < 2\alpha < 4n\lambda\sigma < \frac{\lambda}{\Lambda},
\end{equation}
define 
\[\psi(x,t) = t^{-\alpha} e^{-\frac{\sigma |x|^{2}}{t}}.\]
Then there exist positive constants $T_1$ and $\gamma_1$ depending on the ellipticity constants, $b_0$, $c_0$, $\sigma$, and $\alpha$, such that for $(x,t) \in \Omega_{T_{1}}$, we have
\begin{equation}\label{req1}
-\frac{\partial \psi}{\partial t} + \mathcal{M}_{\lambda,\Lambda}^+(D^2\psi) + b(x,t) \cdot D\psi + c(x,t)\psi \leq -\gamma_1 (t + |x|^{2}) t^{-2} \psi(x,t),
\end{equation}
and
\begin{equation}\label{req11}
\left\{
\begin{aligned}
(i) \quad |D_{i}\psi(x,t)| &\leq (\gamma_{1}t)^{-1}|x|\psi(x,t) \quad \text{for} \ i = 1, 2, \ldots, n, \\
(ii) \quad |D_{ij}\psi(x,t)| &\geq \gamma^{-1}_{1} t^{-2} (\delta_{ij}t + |x|^{2}) \psi(x,t) \quad \text{for} \ i, j = 1, 2, \ldots, n, \\
(iii) \quad |D_{t}\psi(x,t)| &\leq \gamma^{-1}_{1} t^{-2} (\delta_{ij}t + |x|^{2}) \psi(x,t).
\end{aligned}
\right.
\end{equation}
\end{lemma}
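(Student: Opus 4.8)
The plan is to verify the differential inequality \eqref{req1} by a direct computation using the radial structure of $\psi$, and then extract the pointwise bounds \eqref{req11} as byproducts of the same calculation. First I would compute the derivatives of $\psi(x,t)=t^{-\alpha}e^{-\sigma|x|^2/t}$ explicitly. Writing $\phi(t)=t^{-\alpha}$ and using the radial profile $\tilde{\psi}(r,t)=t^{-\alpha}e^{-\sigma r^2/t}$ with $r=|x|$, one gets $D_i\psi = -\tfrac{2\sigma x_i}{t}\psi$, $D_{ij}\psi = \bigl(-\tfrac{2\sigma}{t}\delta_{ij} + \tfrac{4\sigma^2 x_i x_j}{t^2}\bigr)\psi$, and $\partial_t\psi = \bigl(-\tfrac{\alpha}{t} + \tfrac{\sigma|x|^2}{t^2}\bigr)\psi$. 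From here the inequalities in \eqref{req11}(i) and (iii) are immediate once $\gamma_1$ is chosen small enough (the factor $|x|$ in (i) absorbs $2\sigma/\gamma_1$, and (iii) follows since $|\partial_t\psi|\le (\alpha/t + \sigma|x|^2/t^2)\psi$). For (ii) one notes $\tilde\psi''(r,t) = (-\tfrac{2\sigma}{t}+\tfrac{4\sigma^2r^2}{t^2})\psi$ and $\tilde\psi'(r,t)/r = -\tfrac{2\sigma}{t}\psi$, so the eigenvalues of $D^2\psi$ given by Lemma \ref{radial} are controlled below by $\gamma_1^{-1}t^{-2}(\delta_{ij}t+|x|^2)\psi$ for suitable $\gamma_1$; I should be slightly careful that (ii) as stated is really a lower bound on the diagonal entries $D_{ii}\psi$ together with a bound on off-diagonals, which again follows by inspection of the explicit formulas.

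The core is the estimate \eqref{req1}. By Lemma \ref{radial}, for $x\neq 0$ the Hessian $D^2\psi$ has eigenvalue $\tilde\psi'(r,t)/r = -\tfrac{2\sigma}{t}\psi < 0$ with multiplicity $n-1$ and eigenvalue $\tilde\psi''(r,t) = \bigl(-\tfrac{2\sigma}{t}+\tfrac{4\sigma^2 r^2}{t^2}\bigr)\psi$ with multiplicity $1$. Hence, by the definition \eqref{pucci} of $\mathcal{M}^+_{\lambda,\Lambda}$,
\[
\mathcal{M}^+_{\lambda,\Lambda}(D^2\psi) \le \Lambda\Bigl(\tfrac{4\sigma^2 r^2}{t^2}\Bigr)\psi \;-\; \lambda\Bigl(\tfrac{2\sigma}{t}\Bigr)\bigl[(n-1) + \mathbf{1}_{\{4\sigma^2 r^2/t^2 < 2\sigma/t\}}\bigr]\psi,
\]
and in the worst case (treating the single eigenvalue as positive) $\mathcal{M}^+_{\lambda,\Lambda}(D^2\psi)\le \bigl(\tfrac{4\Lambda\sigma^2 r^2}{t^2} - \tfrac{2(n-1)\lambda\sigma}{t}\bigr)\psi$. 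Combining with $-\partial_t\psi = \bigl(\tfrac{\alpha}{t}-\tfrac{\sigma r^2}{t^2}\bigr)\psi$, the second-order and the time term together contribute
\[
\Bigl(\tfrac{\alpha - 2(n-1)\lambda\sigma}{t} \;+\; \tfrac{(4\Lambda\sigma^2-\sigma)r^2}{t^2}\Bigr)\psi .
\]
The hypothesis \eqref{ineqa} is exactly what makes both coefficients strictly negative: $2\alpha < 4n\lambda\sigma$ gives $\alpha - 2(n-1)\lambda\sigma \le \alpha - \tfrac{3}{2}\cdot\tfrac{2\alpha}{3}\cdot\cdots$ — more precisely $\alpha < 2n\lambda\sigma$ so for $n\ge 1$ the coefficient $\alpha - 2(n-1)\lambda\sigma$ is bounded above by a negative constant times $t^{-1}$ (for $n=1$ this needs $\alpha>0$, which holds, and one uses the negative eigenvalue term more carefully) — while $4\sigma < \lambda/\Lambda \le 1$ forces $4\Lambda\sigma^2 - \sigma = \sigma(4\Lambda\sigma - 1) < 0$. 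Thus there is $\gamma_0>0$ with $-\partial_t\psi + \mathcal{M}^+_{\lambda,\Lambda}(D^2\psi) \le -\gamma_0(t+|x|^2)t^{-2}\psi$.

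It remains to absorb the lower-order terms $b\cdot D\psi + c\psi$ into this negative leading term for small $t$, and this is where assumption (A1) enters — the only genuinely delicate point. Using \eqref{req11}(i), $|b\cdot D\psi| \le b_0(t)\,|x|\,\tfrac{2\sigma}{t}\psi$; by Young's inequality $b_0(t)|x|t^{-1} \le \tfrac{1}{2}\bigl(b_0(t)^2 + |x|^2 t^{-2}\bigr)$, and since $b_0(t) = o(t^{-1/2})$ we have $b_0(t)^2 = o(t^{-1}) = o(t\cdot t^{-2})$, so for $t\le T_1$ small this is dominated by $\tfrac{\gamma_0}{4}(t+|x|^2)t^{-2}\psi$. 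Similarly $|c\psi| \le c_0(t)\psi$ and $c_0(t) = o(t^{-1}) = o(t\cdot t^{-2})$, so for $t \le T_1$ small, $|c\psi| \le \tfrac{\gamma_0}{4}(t+|x|^2)t^{-2}\psi$. Choosing $T_1$ so that both smallness conditions hold and setting $\gamma_1 = \gamma_0/2$ (after possibly shrinking it further to make \eqref{req11} valid) completes the proof. The main obstacle is bookkeeping: keeping the dependence of $T_1$ and $\gamma_1$ on $\lambda,\Lambda,b_0,c_0,\sigma,\alpha$ transparent, and handling the borderline low-dimensional case and the sign of the single Hessian eigenvalue correctly in the Pucci estimate; everything else is the routine differentiation above.
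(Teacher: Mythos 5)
Your strategy is the same as the paper's: compute the derivatives of $\psi$ explicitly, estimate $\mathcal{M}^+_{\lambda,\Lambda}(D^2\psi)$, and absorb the drift and zeroth-order terms into the negative leading term for small $t$ using (A1). (The paper estimates the Pucci operator by splitting $D^2\psi = -\tfrac{2\sigma}{t}I\,\psi + \tfrac{4\sigma^2}{t^2}(x\otimes x)\psi$ and using subadditivity of $\mathcal{M}^+$ together with $\mathcal{M}^+(-I)=-\mathcal{M}^-(I)=-n\lambda$; your eigenvalue computation via Lemma \ref{radial} is an equivalent route.) However, two of your constants do not close as written.

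First, your ``worst case'' bound $\mathcal{M}^+_{\lambda,\Lambda}(D^2\psi)\le\bigl(\tfrac{4\Lambda\sigma^2|x|^2}{t^2}-\tfrac{2(n-1)\lambda\sigma}{t}\bigr)\psi$ discards the $-\tfrac{2\sigma}{t}$ part of the distinguished eigenvalue, and \eqref{ineqa} only gives $\alpha<2n\lambda\sigma$, not $\alpha<2(n-1)\lambda\sigma$, so the coefficient of $t^{-1}$ need not be negative. You flag this yourself; the fix is to keep that term: when the distinguished eigenvalue is positive it is weighted by $\Lambda$ and $-\Lambda\tfrac{2\sigma}{t}\le-\lambda\tfrac{2\sigma}{t}$, while when it is nonpositive all eigenvalues carry $\lambda$; either way $\mathcal{M}^+_{\lambda,\Lambda}(D^2\psi)\le\bigl(\tfrac{4\Lambda\sigma^2|x|^2}{t^2}-\tfrac{2n\lambda\sigma}{t}\bigr)\psi$, which is the estimate the paper obtains. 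Second, your unweighted Young inequality $b_0(t)|x|t^{-1}\le\tfrac12\bigl(b_0(t)^2+|x|^2t^{-2}\bigr)$ turns the drift term into $\sigma|x|^2t^{-2}\psi$ plus a harmless piece, but the available negative margin in the $|x|^2t^{-2}$ slot is only $\sigma(1-4\Lambda\sigma)|x|^2t^{-2}\psi<\sigma|x|^2t^{-2}\psi$, so this contribution cannot be absorbed no matter how small $T_1$ is. You need a weighted Young inequality with a small parameter, e.g. $2b_0(t)|x|\le\epsilon|x|^2t^{-1}+\epsilon^{-1}tb_0(t)^2$ with $\epsilon=(1-4n\sigma\Lambda)/2$ as in the paper; then the $|x|^2$ contribution uses only part of the margin and the remaining $\epsilon^{-1}tb_0(t)^2$ term is $o(1)$ by (A1) and is absorbed by shrinking $T_1$. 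With these two repairs your argument coincides with the paper's.
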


\begin{proof}
From the expression of $\psi$, it is clear that  
\begin{equation*}
\begin{aligned}
D\psi(x,t) &= \frac{-2\sigma}{t} x \psi, \\
\frac{\partial \psi}{\partial t} &= \left(-\frac{\alpha}{t} + \frac{\sigma |x|^{2}}{t^{2}}\right) \psi, \\
D^{2}\psi(x,t) &= \left(\frac{-2\sigma}{t} I + \frac{4\sigma^{2}}{t^{2}} x \otimes x\right) \psi.
\end{aligned}
\end{equation*}

Consider 
\begin{align}
&-\frac{\partial \psi}{\partial t} + \mathcal{M}_{\lambda,\Lambda}^+(D^2\psi) + b(x,t) \cdot D\psi + c(x,t)\psi = \nonumber \\
&\left(\frac{\alpha}{t} - \frac{\sigma |x|^{2}}{t^{2}}\right) \psi + \psi \mathcal{M}_{\lambda,\Lambda}^{+} \left(\frac{-2\sigma}{t} I + \frac{4\sigma^{2}}{t^{2}} x \otimes x\right) - \frac{2\sigma}{t} \psi b(x,t) \cdot x + c(x,t)\psi \nonumber \\
&\leq \left(\frac{\alpha}{t} - \frac{\sigma |x|^{2}}{t^{2}}\right) \psi - \frac{2\sigma}{t} \psi \mathcal{M}_{\lambda,\Lambda}^{-}(I) + \frac{4\sigma^{2}}{t^{2}} \psi \mathcal{M}_{\lambda,\Lambda}^{+}(x \otimes x) + \frac{2\sigma}{t} b_{0}(t) |x| \psi + c_{0}(t)\psi \nonumber \\
&= \left(\frac{\alpha}{t} - \frac{\sigma |x|^{2}}{t^{2}}\right) \psi - \frac{2n\lambda\sigma}{t} \psi + \frac{4\sigma^{2}}{t^{2}} \Lambda |x|^{2} \psi + \frac{2\sigma}{t} b_{0}(t) |x| \psi + c_{0}(t)\psi. \label{above}
\end{align}

Now, using Young's inequality, we obtain:
\[2b_{0}(t)|x| \leq \epsilon \frac{|x|^{2}}{t} + \frac{t b_{0}^{2}(t)}{\epsilon},\]
where $\epsilon = (1 - 4n\sigma\Lambda)/2$. Observe that $\epsilon$ is a positive constant because of the condition \eqref{ineqa}. With this choice of $\epsilon$, the calculation in \eqref{above} takes the form:
\begin{align*}
&\leq \left(\alpha - 2\sigma\lambda n + \frac{2t\sigma b_{0}^{2}(t)}{1 - 4n\sigma\Lambda} + t c_{0}(t)\right) \frac{\psi}{t} + \left(-2 + 8\sigma\Lambda + 2\epsilon\right) \frac{\sigma |x|^{2}}{2t^{2}} \psi \\
&= \left(\alpha - 2\sigma\lambda n + \frac{2t\sigma b_{0}^{2}(t)}{1 - 4n\sigma\Lambda} + t c_{0}(t)\right) \frac{\psi}{t} - \left(1 + 8\sigma\Lambda \left(\frac{n}{2} - 1\right)\right) \frac{\sigma |x|^{2}}{2t^{2}} \psi.
\end{align*}

Now, we choose $T_{1}$ such that for all $t \in (0, T_{1})$ we have 
\[
\frac{2t\sigma b_{0}^{2}(t)}{1 - 4n\sigma\Lambda} + t c_{0}(t) \leq \frac{2\sigma\lambda n - \alpha}{2},
\]
which is always possible in view of the assumptions on $b_0$ and $c_0$. Then \eqref{req1} follows with \[\gamma_{1} = \min \left(\frac{2\sigma\lambda n - \alpha}{2}, \left(1 + 8\sigma\Lambda \left(\frac{n}{2} - 1\right)\right) \frac{\sigma}{2}\right).\]
\end{proof}

Now consider 
\begin{equation}\label{phi}
\phi(x,t) = t^{1-\beta} + (1 + t^{\beta}) |x|^{2} \quad \text{for} \ (x,t) \in \mathbb{R}^{n} \times (0, \infty),
\end{equation}
where $\beta$ is the same as in the definition of $c_{0}$.

\begin{proposition}\label{phi11}
There exist positive constants $T_{2}$ and $\gamma_{2}$ depending on $n, \Lambda, \beta, b_{0}$, and $c_{0}$ such that in $\Omega_{T_{2}}$ we have the following:
\begin{equation}\label{req222}
-\frac{\partial \phi}{\partial t} + \mathcal{M}_{\lambda,\Lambda}^+(D^2\phi) + b(x,t) \cdot D\phi + c(x,t)\phi \leq -\gamma_2 \left(t^{-\beta} + t^{\beta-1} |x|^{2}\right).
\end{equation}
It is also clear from the definition of $\phi$ that
\begin{equation}
\left|\frac{\partial \phi}{\partial t}\right| \leq \left[t^{-\beta} + t^{\beta-1} |x|^{2}\right].
\end{equation}
\end{proposition}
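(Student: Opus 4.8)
The plan is to imitate the proof of the preceding lemma: compute all first- and second-order derivatives of $\phi$, evaluate Pucci's operator on the (here very simple) Hessian, and then absorb every lower-order contribution into the two negative terms produced by $-\partial_t\phi$ by shrinking the final time $T_2$.

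First I would record, directly from \eqref{phi}, that
\[
D\phi(x,t) = 2(1+t^\beta)\,x,\qquad D^2\phi(x,t) = 2(1+t^\beta)\,I,\qquad \frac{\partial\phi}{\partial t}(x,t) = (1-\beta)\,t^{-\beta} + \beta\, t^{\beta-1}|x|^2 .
\]
Since $D^2\phi$ is a \emph{positive} multiple of the identity, all its eigenvalues equal $2(1+t^\beta)>0$, so by \eqref{pucci} one has exactly $\mathcal{M}_{\lambda,\Lambda}^+(D^2\phi) = 2n\Lambda(1+t^\beta)$; this is where the present computation is easier than the one in the previous lemma, whose Hessian carried a rank-one perturbation. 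Combining this with $|b(x,t)|\le b_0(t)$ and with $c\le 0$ (hence $c\phi\le 0$ because $\phi>0$), I obtain
\[
-\frac{\partial\phi}{\partial t} + \mathcal{M}_{\lambda,\Lambda}^+(D^2\phi) + b\cdot D\phi + c\phi \le -(1-\beta)t^{-\beta} - \beta\, t^{\beta-1}|x|^2 + 2n\Lambda(1+t^\beta) + 2b_0(t)(1+t^\beta)|x| .
\]

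Next I would dispose of the first-order term by Young's inequality, splitting the powers of $t$ so that the quadratic part lands exactly on $t^{\beta-1}|x|^2$: writing $2b_0(1+t^\beta)|x| = 2\big(t^{(\beta-1)/2}|x|\big)\big(t^{(1-\beta)/2}b_0(1+t^\beta)\big)$ and applying $2ab\le \varepsilon a^2 + \varepsilon^{-1}b^2$ with $\varepsilon = \beta/2$ yields
\[
2b_0(t)(1+t^\beta)|x| \le \tfrac{\beta}{2}\, t^{\beta-1}|x|^2 + \tfrac{2}{\beta}\, b_0^2(t)(1+t^\beta)^2 t^{1-\beta}.
\]
Substituting this in, the $|x|^2$-terms collapse to $-\tfrac{\beta}{2}t^{\beta-1}|x|^2$, and it only remains to bound the purely time-dependent remainder $R(t) := 2n\Lambda(1+t^\beta) + \tfrac{2}{\beta}b_0^2(t)(1+t^\beta)^2 t^{1-\beta}$ against $(1-\beta)t^{-\beta}$.

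Finally, as $t\to 0^+$ the term $2n\Lambda(1+t^\beta)$ stays bounded, hence is $o(t^{-\beta})$, while $b_0(t)=o(t^{-1/2})$ from $(A1)$ gives $b_0^2(t)\,t^{1-\beta} = \big(b_0^2(t)\,t\big)\,t^{-\beta} = o(t^{-\beta})$; therefore $R(t) = o(t^{-\beta})$, and one may pick $T_2>0$ so small that $R(t)\le \tfrac{1-\beta}{2}t^{-\beta}$ for every $t\in(0,T_2)$. This gives \eqref{req222} on $\Omega_{T_2}$ with $\gamma_2 = \min\!\big(\tfrac{1-\beta}{2},\tfrac{\beta}{2}\big)$. (Had we instead used $|c|\le c_0$, the additional term $c_0(t)\phi$ is absorbed in the same manner, since $c_0(t) = o(t^{1-\beta})$ makes $c_0(t)t^{1-\beta} = o(t^{-\beta})$ and $c_0(t)(1+t^\beta)|x|^2 = o(1)\cdot t^{\beta-1}|x|^2$.) The remaining assertion of the proposition is immediate from the formula for $\partial_t\phi$: since $0<\beta<1$, both $1-\beta$ and $\beta$ lie in $(0,1)$, so $\big|\partial_t\phi\big| = (1-\beta)t^{-\beta} + \beta t^{\beta-1}|x|^2 \le t^{-\beta} + t^{\beta-1}|x|^2$. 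No genuine obstacle arises here; the only step requiring care — and the closest thing to a difficulty — is choosing the Young split so that the exponents of $t$ align with $t^{\beta-1}|x|^2$, after which $(A1)$ guarantees that the leftover remainder is truly $o(t^{-\beta})$.
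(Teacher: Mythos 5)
Your proof is correct and follows essentially the same route as the paper's: compute $D\phi$, $D^2\phi$, $\partial_t\phi$, evaluate $\mathcal{M}^+_{\lambda,\Lambda}$ on the multiple of the identity, absorb the drift term into $-\beta t^{\beta-1}|x|^2$ via Young's inequality with the same split of powers of $t$, and shrink $T_2$ using $(A1)$, arriving at the same constant $\gamma_2=\min\{\beta/2,(1-\beta)/2\}$. The only cosmetic difference is that you drop $c\phi$ using $c\le 0$ while the paper absorbs it via $|c|\le c_0$, and your parenthetical already covers that variant.
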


\begin{proof}
From the calculations, we find that 
\begin{equation*}
\begin{aligned}
D\phi(x,t) &= 2(1 + t^{\beta})x, \\
D^{2}\phi(x,t) &= 2(1 + t^{\beta})I.
\end{aligned}
\end{equation*}

\begin{align}
&-\frac{\partial \phi}{\partial t} + \mathcal{M}_{\lambda,\Lambda}^+(D^2\phi) + b(x,t) \cdot D\phi + c(x,t)\phi \nonumber \\
&\leq 2n\Lambda(1 + t^{\beta}) + 2(1 + t^{\beta}) b(x,t) \cdot x - (1 - \beta)t^{-\beta} - \beta t^{\beta-1} |x|^{2} + c(x,t) \left[t^{1-\beta} + (1 + t^{\beta}) |x|^{2}\right] \nonumber \\
&\leq 4n\Lambda + \frac{8}{\beta} t^{1-\beta} b_{0}^{2}(t) - (1 - \beta)t^{-\beta} - \beta t^{\beta-1} |x|^{2} + \{t c_{0}(t)\} t^{-\beta} + 2 \{t^{\beta-1} c_{0}(t)\} t^{1-\beta} |x|^{2}. \nonumber
\end{align}

In the above calculations, we have used the following Young's inequality:
\[b_{0}(t) \leq \frac{\beta}{8} t^{\beta-1} |x|^{2} + \frac{2}{\beta} t^{1-\beta} b_{0}^{2}(t).\]

Now, we choose $T_{2}$ sufficiently small such that 
\begin{equation*}
\left\{
\begin{aligned}
4n\Lambda t^{\beta} + \frac{8}{\beta} t b_{0}^{2}(t) + t c_{0} t &< \frac{(1 - \beta)}{2}, \\
2c_{0}(t) t^{\beta-1} &< \frac{\beta}{2},
\end{aligned}
\right.
\end{equation*}
holds for $0 < t \leq T_{2} \leq 1.$ Note that the above choice of $T_{2}$ is possible in view of the assumptions on $b_0$ and $c_0$. Finally, \eqref{req1} follows by taking $\gamma_{2} = \min \left\{\beta/2, \ (1 - \beta)/2\right\}$.
\end{proof}
\section{Main Result in the Case of the Base Boundary}

\begin{theorem}\label{bmain}
Assume that $b$ and $c$ satisfy the above-mentioned conditions. Let $E \subset \overline{\Omega}$ with 
\[ \mathscr{H}(E) < \frac{\lambda}{\Lambda} \]
and let $y_0$ be a point on $E$. Let $u$ be a real-valued function in $\Omega \times (0, T)$ of class $C^{2,1}$. Suppose that for some constant $L > 0$ and $\tau \in (0, \beta)$, we have:
\[ u(x,t) \geq -L \quad \text{and} \quad u \leq L t^{-\tau} \quad \text{for} \quad (x,t) \in \Omega_{T}. \]
Then $u \geq 0$ on $\partial \Omega_{T} \setminus (E \times \{0\})$ near $(y_0, 0)$ implies $u \geq 0$ at $(y_0, 0)$.
\end{theorem}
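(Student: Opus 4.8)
\emph{Plan.} The plan is to normalise $y_0=0$ by translation and then to show $u\ge -A\phi$ on a small fixed parabolic box $Q_\rho:=(\Omega\cap B_\rho(0))\times(0,\rho^2)$ for a suitable constant $A$, where $\phi$ is the function in \eqref{phi}. Since $\phi(x,t)\to 0$ as $(x,t)\to(0,0)$ and $u$ is continuous in $\Omega_T$, this will force $\liminf_{(x,t)\to(0,0)}u(x,t)\ge 0$, which is the assertion. To get the bound $u\ge-A\phi$ I would construct, for each $\varepsilon>0$, one fixed nonnegative supersolution barrier $V$ of \eqref{excep} (the same $V$ for all $\varepsilon$, finite at every point of $\Omega_T$), set $W_\varepsilon:=u+\varepsilon V+A\phi$, check $W_\varepsilon\ge 0$ on $\partial_{\mathscr{P}}Q_\rho$, invoke the comparison principle against the subsolution $0$ to conclude $W_\varepsilon\ge 0$ on $Q_\rho$, and finally let $\varepsilon\to 0$ at interior points, where $V$ and $\phi$ are finite.

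\emph{The barrier.} First I would fix the exponents. Because $\mathscr H(E)<\lambda/\Lambda$ there is room to choose $\alpha$ with $\mathscr H(E)<2\alpha<\lambda/\Lambda$ and then $\sigma$ with $2\alpha<4n\lambda\sigma<\lambda/\Lambda$; this is exactly the admissibility condition \eqref{ineqa}, so $\psi(y,t)=t^{-\alpha}e^{-\sigma|y|^2/t}$, and equally every translate $\psi(\cdot-z,\cdot)$, is a positive supersolution of \eqref{excep} on $\Omega_{T_1}$ by \eqref{req1}, while $\phi$ is a positive supersolution on $\Omega_{T_2}$ by Proposition \ref{phi11}. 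I would then shrink $\rho$ so that $\rho^2<\min(T,T_1,T_2)$ and so that $u\ge 0$ on $\big(\partial\Omega_T\setminus(E\times\{0\})\big)\cap\overline{Q_\rho}$. Next, picking $\mu$ with $\mathscr H(E)<\mu<2\alpha$ and applying Lemma \ref{house} once per scale, I would cover $E\cap\overline{B_\rho(0)}$, for each $k\ge1$, by balls $\{B_{r^k_i}(x^k_i)\}_i$ with $x^k_i\in E$, $r^k_i\le 2^{-k}$ and $\sum_i(r^k_i)^{\mu}<2^{-k}$, and put
\[
V(x,t):=\sum_{k\ge1}\sum_i (r^k_i)^{2\alpha}\,\psi\big(x-x^k_i,\,t\big).
\]
Since $\mathcal M^+_{\lambda,\Lambda}$ is positively homogeneous and subadditive, a locally uniformly convergent nonnegative combination of supersolutions is again a supersolution; and the series converges locally uniformly in $\Omega_{T_1}$ because $\psi\le t^{-\alpha}$ and $\sum_k\sum_i(r^k_i)^{2\alpha}\le\sum_k 2^{-k(2\alpha-\mu)}\cdot 2^{-k}<\infty$. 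So $V$ is a finite nonnegative supersolution in $\Omega_{T_1}$. The two properties I would extract are: (a) if $\mathrm{dist}(x,E)=d>0$ then $V(x,t)\le C\,t^{-\alpha}e^{-\sigma d^2/(4t)}\to 0$ as $t\to 0^+$, so $V$ vanishes in the limit at every base point off $E$; and (b) on the parabolic neighbourhood $\{\mathrm{dist}(x,E)\le\sqrt t\}$ of $E\times\{0\}$, keeping for each scale $k$ with $2^{-k}\le\sqrt t$ the single summand attached to a ball containing the nearest point of $E$ gives $V(x,t)\gtrsim t^{\mu/2-\alpha}\to\infty$ as $t\to 0^+$ (here $2\alpha>\mu$ is essential). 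Property (b) is where the hypothesis $\mathscr H(E)<\lambda/\Lambda$ is really spent: it is the bound on $2\alpha$ imposed by \eqref{ineqa} that has to beat $\mathscr H(E)$.

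\emph{Comparison and conclusion.} I would choose $A$ so large that $A\phi\ge L$ on $\{|x|=\rho\}$ (possible since $\phi\ge\rho^2$ there). Then $W_\varepsilon=u+\varepsilon V+A\phi$ is a supersolution of \eqref{excep} on $Q_\rho$ which is nonnegative on $\partial_{\mathscr{P}}Q_\rho$: on $\big(\partial\Omega_T\setminus(E\times\{0\})\big)\cap\overline{Q_\rho}$ because $u\ge0$ and $\varepsilon V+A\phi\ge0$; on the artificial lateral face $(\partial B_\rho(0)\cap\overline\Omega)\times[0,\rho^2]$ because $u\ge-L$ and $A\phi\ge L$; and on the exceptional face $(E\cap\overline{B_\rho(0)})\times\{0\}$ because $u\ge-L$ while $\varepsilon V\to+\infty$ by (b). The two–sided bound $-L\le u\le Lt^{-\tau}$, together with $V\le t^{-\alpha}$ (and $\alpha<\tfrac12$) and the boundedness of $\phi$, keeps $W_\varepsilon$ within the growth class at $t=0$ for which the comparison principle is valid. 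Hence $W_\varepsilon\ge 0$ on $Q_\rho$, i.e. $u\ge-\varepsilon V-A\phi$ there; letting $\varepsilon\to 0$ at an interior point and then $(x,t)\to(0,0)$ yields $\liminf u\ge 0$ at $(0,0)$.

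\emph{Main difficulty.} The hard part will be the inequality on the exceptional face: the comparison principle requires $\liminf_{(x,t)\to(x_*,0)}W_\varepsilon(x,t)\ge 0$ for \emph{every} approach from inside $\Omega_T$ and every $x_*\in E$, whereas property (b) controls $V$ only along approaches lying in the parabolic region $\{\mathrm{dist}(x,E)\le\sqrt t\}$. The remedy I anticipate is to run the comparison not on the box $Q_\rho$ but on the region $\{(x,t)\in Q_\rho:\ t\ge \mathrm{dist}(x,E)^2\}$, whose parabolic boundary reaches $E\times\{0\}$ only through that parabolic region, and then to verify that the new parabolic piece of boundary created by this excision — on which $u\ge-L$ — is again dominated by $\varepsilon V$ for small $t$ and by $A\phi$ for $t$ bounded away from $0$. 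Carrying out this geometry, which is the parabolic refinement in Hile--Yeh \cite{hile1988exceptional} of the elliptic exceptional-set argument, and pinning down the precise $t\to 0$ admissibility class needed for the comparison principle, is where the genuine work lies.
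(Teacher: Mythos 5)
Your overall scaffolding (translate $y_0$ to the origin, add a multiple of $\phi$ to absorb the artificial lateral face, cover $E$ by balls via Lemma \ref{house}, and sum translated copies of $\psi$ weighted by powers of the radii) is the right family of ideas, but the mechanism you rely on at the exceptional face is broken, and you have half-diagnosed this yourself. The gap is your property (b): the claim that $V(x,t)\gtrsim t^{\mu/2-\alpha}\to\infty$ on $\{\mathrm{dist}(x,E)\le\sqrt t\}$. Lemma \ref{house} gives only an \emph{upper} bound $r_i^k\le 2^{-k}$ and a smallness condition $\sum_i (r_i^k)^\mu<2^{-k}$; it gives no lower bound on the radii. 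Since your weights are $(r_i^k)^{2\alpha}$ with $2\alpha>\mu$, the summand attached to the ball containing the point of $E$ nearest to $x$ contributes at most $(2^{-k})^{2\alpha}t^{-\alpha}\approx t^{\alpha}\cdot t^{-\alpha}=O(1)$ at the matched scale $2^{-k}\approx\sqrt t$, and strictly less if the covering is more efficient than the trivial one (e.g.\ for a Cantor-type $E$ of positive dimension the optimal covering forces $r_i^k\ll 2^{-k}$ and one can check $V$ stays bounded, or even tends to $0$, along parabolic approaches to $E$). So $\varepsilon V$ does not dominate the constant $-L$ at $E\times\{0\}$, the boundary inequality for $W_\varepsilon$ fails precisely on the exceptional face, and your proposed remedy (running the comparison on $\{t\ge\mathrm{dist}(x,E)^2\}$) does not help because the new boundary piece needs the same false lower bound. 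A secondary issue: the comparison principle you invoke "within a growth class at $t=0$" is not the one stated in the paper (which requires the boundary inequality at every point of $\partial_{\mathscr P}$), and assuming such a Phragmén--Lindelöf-type statement here is close to circular.

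The paper's proof avoids ever needing the barrier to blow up \emph{at} $E\times\{0\}$. For each covering ball $B_{r_i}(y_i)$ it removes from the comparison domain the inverted paraboloid $P_i=\{|x-y_i|^2+t<r_i^2\}$; these paraboloids cover $E\times\{0\}$, so the parabolic boundary of $A\setminus\overline P$ never touches the exceptional set. It also time-shifts each barrier, using $\psi(x-y_i,t+r_i^2)$ with weight $r_i^{\lambda/\Lambda-\delta}$ where $\mathscr H(E)<\lambda/\Lambda-\delta<2\alpha$. On $\partial P_i$ one has $t+r_i^2\le 2r_i^2$ and $|x-y_i|\le r_i$, so that single summand is $\ge \gamma_1 r_i^{(\lambda/\Lambda-\delta)-2\alpha}\ge\gamma_1\nu^{(\lambda/\Lambda-\delta)-2\alpha}$, which beats $L$ once $\nu$ is small because the exponent is \emph{negative} --- a matched-scale estimate that uses only the upper bound $r_i\le\nu$ and is therefore immune to the covering radii being tiny. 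The conclusion then follows by letting $\nu\to0$ (so $\overline P$ shrinks away) and $\epsilon\to0$, using that the whole series is bounded by $\epsilon t^{-\alpha}$. If you replace your global blow-up argument by this excision-plus-matched-scale device, the rest of your outline goes through.
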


\begin{proof}
As the given PDE is translation invariant, without loss of generality we can assume that $y_0 = 0$. From the given hypothesis, we can find a ball $\mathscr{B}_{r} \subset \mathbb{R}^{n+1}$ such that
\[ u \geq 0 \quad \text{on} \quad \overline{\mathscr{B}_{r}} \cap [\partial \Omega_{T} \setminus (E \times \{0\})] \]
for all $0 < r < r_0$, for some $r_0 > 0$. Our aim is to produce an open neighborhood $\mathscr{N}$ of $(0, 0)$ in $\mathbb{R}^{n+1}$ such that $\liminf_{(x,t) \rightarrow (0, 0)} u \geq 0$ in $\mathscr{N} \cap \Omega_{T}$. Let us begin by observing that the function $u + \phi$ satisfies 
\[ \mathbb{L}(u + \phi) \leq \mathbb{L}(u) + \mathbb{L}(\phi) \leq (M t^{\beta - \tau} - \gamma_{2}) t^{-\beta}, \]
where $\phi$ is constructed in Lemma \ref{phi11}. Since $\beta > \tau$, if necessary, we may choose $r$ smaller such that:
\begin{equation}\label{req333}
\mathbb{L}(u + \phi) < 0 \quad \text{in} \quad \mathscr{B}_{r} \cap \Omega_{T}.
\end{equation}
Now, let us define $\delta = \left[\frac{\lambda}{\Lambda} - \mathscr{H}(E)\right] / 2$ and choose positive constants $\alpha$ and $\sigma$ satisfying
\[ 0 \leq \mathscr{H}(E) = \frac{\lambda}{\Lambda} - 2\delta < \frac{\lambda}{\Lambda} - \delta < 2\alpha < 4n\lambda\sigma < \frac{\lambda}{\Lambda}. \]

In view of $\frac{\lambda}{\Lambda} - \delta - 2\alpha < 0$, we can choose $\nu > 0$ sufficiently small satisfying 
\begin{equation}\label{123}
0 < \nu < r, \quad r + \nu^{2} < T \quad \text{and} \quad \gamma_{1} \nu^{\frac{\lambda}{\Lambda} - \delta - 2\alpha} > L.
\end{equation}
Moreover, by definition of $\delta$, we have $\mathscr{H}(E) < \frac{\lambda}{\Lambda} - \delta$, so for any $\epsilon > 0$, Lemma~\ref{house} provides an open covering $\{B_{i}\}$ of $E$ by balls in $\mathbb{R}^{n}$ with the following properties:
\begin{equation}\label{req111}
\left\{
\begin{aligned}
&\text{The centers of these balls lie in } E, \\
&\text{The radii of the balls satisfy } 0 < r_{i} \leq \nu, \\
&\sum_{i} r_{i}^{\frac{\lambda}{\Lambda} - \delta} < \epsilon.
\end{aligned}
\right.
\end{equation}
Corresponding to each ball $B_{i}$ in the covering, we define the following inverted paraboloids:
\[ P_{i} := \{(x,t) \in \mathbb{R}^{n+1} \mid |x - y_{i}|^{2} + t < r_{i}^{2} \}. \]
Observe that each paraboloid contains the corresponding ball from the covering. Therefore, $\{P_{i}\}$ also covers $E \times \{0\}$. Let us define open sets:
\[ A = \mathscr{B}_{r} \cap \Omega_{T} \quad \text{and} \quad P := \bigcup_{i} P_{i}. \]
Since $t \leq \nu < r$ in $P_{i}$ for each $i$, we find that $A \setminus \overline{P}$ is nonempty. Now, we show that $\mathscr{B}_{r}$ plays the role of $\mathscr{N}$. To show this, let us define the following auxiliary function:
\[ w(x,t) = u(x,t) + \left(1 + \frac{L}{r^{2}}\right) \phi(x,t) + \sum_{i} r_{i}^{\frac{\lambda}{\Lambda} - \delta} \psi(x - y_{i}, t + r_{i}^{2}). \]
Observe from \eqref{123} that $t + r_{i}^{2} \leq t + \nu^{2} \leq r + \nu^{2} < T$ in $\mathscr{B}_{r} \cap \Omega_{T}$; thus, each function $\psi(x - y_i, t + r_i^2)$ is defined for $(x, t)$ in $\mathscr{B}_{r} \cap \Omega_{T}$, and moreover will satisfy appropriately translated versions of \eqref{req11}. In fact, the estimates in \eqref{req11} and the sum in \eqref{req111}, along with the estimate $\psi(x,t) \leq t^{-\alpha}$, show that the series in the definition of $w$ converges uniformly on compact subsets of $\mathscr{B}_{r} \cap \Omega_{T}$, along with the series of space derivatives up to order two and the time derivative up to order one. Thus, the series represents a function in $\mathscr{B}_{r} \cap \Omega_{T}$ of class $C^{2,1}$, and the series may be differentiated termwise up to these orders. From \eqref{req1}, \eqref{req222}, and \eqref{req333}, we conclude therefore that in $\mathscr{B}_{r} \cap \Omega_{T}$,
\[ \mathscr{L} w \leq \mathscr{L} u + \left(1 + \frac{L}{r^2}\right) \mathscr{L} \psi \leq \mathscr{L}(u + \psi) < 0. \]
Now, by applying the minimum principle, we want to show that $w \geq 0$ in $A \setminus \overline{P}$.

\textbf{Claim:} This will be accomplished once we have 
\[ w \geq 0 \quad \text{on} \quad \partial_{\mathscr{P}} (A \setminus \overline{P}). \]

\textbf{Proof of the claim:} For this, we first note that 
\begin{align*}
\partial(A \cup (\overline{P})^{c}) &\subset \left(\partial A \cap (\overline{P})^{c}\right) \cup \left(A \cap \partial \overline{P}\right) \cup \left(\partial A \cap \partial \overline{P}\right) \\
&\subset \left(\partial A \setminus P\right) \cup \left(A \cap \partial P\right) \\
&\subset \left(\partial_{\mathscr{P}} (\mathscr{B}_{r}) \cap \overline{\Omega_{T}} \setminus P\right) \cup \left(\mathscr{B}_{r} \cap \partial_{\mathscr{P}} \Omega_{T} \setminus P\right) \cup \left(\mathscr{B}_{r} \cap \left(A \cap \partial P\right)\right).
\end{align*}

Based on the above containment of $\partial (A \setminus \overline{P})$, we consider three cases separately:

\textbf{Case I:} On $\partial (\mathscr{B}_{r}) \cap \overline{\Omega}_{T} \setminus P$, we have $0 \leq t \leq 1$, $|x|^{2} + t^{2} = r^{2}$. Therefore, we have 
\begin{align*}
w(x,t) &\geq u(x,t) + \left(\frac{L}{r^{2}}\right) \phi(x,t) \\
&\geq -L + \frac{L}{r^{2}} (t^{2} + |x|^{2}) = 0.
\end{align*}

\textbf{Case II:} On $\left(\mathscr{B}_{r} \cap \partial_{p} \Omega_{T} \setminus P\right)$, we have $w \geq 0$ since $u \geq 0$ on $\overline{\mathscr{B}_{r}} \cap \left[\partial (\Omega_{T}) \setminus (E \times \{0\})\right]$ by hypothesis and $\left(\mathscr{B}_{r} \cap \partial (\Omega_{T}) \setminus P\right) \subset \overline{\mathscr{B}_{r}} \cap \left[\partial (\Omega_{T}) \setminus (E \times \{0\})\right]$.

\textbf{Case III:} On $A \cap \partial P$, we have $t > 0$. Also, for $t \in P_{i}$, $t \leq r_{i}$ for each $i$ and $r_{i} \rightarrow 0$ as $i \rightarrow \infty$, in view of \eqref{req111}. So for each $(x,t) \in A \cap \partial P$, there exists a positive constant $d_{(x,t)}$ such that the distance from $(x,t)$ to $\overline{P}_{i}$ is larger than $d_{(x,t)}$ for all but finitely many $i$. Thus, for each $(x,t) \in A \cap \partial P$, there exists a positive integer $N$ such that
\[
(x,t) \in \partial \left(\bigcap_{i=1}^{N} P_{i}\right) \subset \bigcap_{i=1}^{N} \partial P_{i}. 
\]
Thus, $(x,t) \in \partial P_{i}$ for some $i$. Then we have
\begin{align*}
w(x,t) &\geq u(x,t) + r_{i}^{\frac{\lambda}{\Lambda} - \delta} \psi(x - y_{i}, t + r_{i}^{2}) \\
&\geq -L + r_{i}^{\frac{\lambda}{\Lambda} - \delta} \left(\gamma_{1} r_{i}^{-2\alpha}\right) \\
&\geq -L + \gamma_{1} \nu^{\frac{\lambda}{\Lambda} - \delta - 2\alpha} \geq 0.
\end{align*}

This completes the proof of the claim. Now, by the definition of $\psi$ and the sum in \eqref{req111}, the series in the definition of $w$ is bounded by $\frac{\epsilon}{t^{\alpha}}$. Therefore, $w \geq 0$ in $\mathscr{B}_{r} \cap \Omega_{T} \setminus \overline{P}$ implies
\[
u(x,t) + \left(1 + \frac{L}{r^{2}}\right) \phi(x,t) \geq -\epsilon t^{\alpha}.
\]
If we let $\nu \rightarrow 0^{+}$, while preserving \eqref{123}, and $\mathscr{B}_{r} \cap \Omega_{T} \setminus \overline{P}$ widens to include all of $\mathscr{B}_{r} \cap \Omega_{T}$ since $t \leq \nu$ in $P$; since $\epsilon$ is also arbitrary, we get
\[
u(x,t) + \left(1 + \frac{L}{r^{2}}\right) \phi(x,t) \geq 0,
\]
in $\mathscr{B}_{r} \cap \Omega_{T}$. As by the definition of $\phi$ in \eqref{phi}, we have $\phi(x,t) \rightarrow (0,0)$, we have $\phi \rightarrow 0$, and we get $\liminf_{(x,t) \rightarrow (0,0^{+})} u(x,t) \geq 0.$
\end{proof}
\section{Singular Barrier and Exceptional Lateral Boundary}

For a given $\theta \in (0, \pi)$ and $R > 0$, we denote by 
\[ C_{\theta,R} = B_{R}(0) \cap \left( \left\{ x \in \mathbb{R}^{N} \mid \arccos\left(\frac{x_{n}}{|x|}\right) \leq \theta \right\} \cup \{0\} \right). \]
the truncated cone with vertex at $0$, axis along $x_{n}$, aperture $\theta$, and radius $R$. The cone with an arbitrary vertex $x_{0} \in \mathbb{R}^{n}$ can be obtained by translating the cone with vertex at $0$, and we will denote it by $C_{\theta,R}(x_{0})$. Now, we present the following result from Theorem 2 \cite{miller1967barriers}, see also \cite{banerjee2015boundary,crandall1999existence}.

\begin{theorem}\label{baaa}
Given $\theta_{0} \in (0, \pi)$ and $\lambda < \Lambda$, there exist positive constants $\eta(\lambda, \Lambda, n, \theta_{0})$ and $\mu(\lambda, \Lambda, n, \theta_{0})$, and a positive function $h \in C^{2}(0, \theta_{0})$ such that for all $|\alpha| < \mu$, the function $v(x) = |x|^\alpha h(\theta(x))$ satisfies 
\begin{equation}
\mathcal{M}_{\lambda, \Lambda}^+(D^2v(x)) \leq -\eta |x|^{\alpha - 2},
\end{equation}
where $\theta(x) = \arccos\left(\frac{x_{n}}{|x|}\right)$ and $x \in \text{int}(C_{\theta_{0}, R})$.
\end{theorem}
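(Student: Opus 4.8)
The plan is to separate variables adapted to the cone, reduce the differential inequality to a one–parameter family of $2\times2$ (plus scalar) matrix inequalities, and then to choose $h$ so that Pucci's maximal operator — which weights positive eigenvalues with the larger constant $\Lambda$ — is forced to stay strictly negative.

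\textbf{Step 1: reduction to a matrix inequality.} Write $r=|x|$, $\theta=\theta(x)=\arccos(x_n/|x|)$, and work, at a point $x$ off the $x_n$–axis, in the moving orthonormal frame $\{e_r,e_\theta,e_1,\dots,e_{n-2}\}$ consisting of the radial direction, the meridional direction (the unit vector orthogonal to $e_r$ in the $2$–plane spanned by $x$ and $e_n$), and the $n-2$ azimuthal directions. The standard computation of the Hessian in spherical coordinates (generalizing Lemma~\ref{radial}) gives, for $v=r^\alpha h(\theta)$, that $D^2 v(x)=r^{\alpha-2}M(\theta,\alpha)$, where $M$ is block diagonal with the $2\times2$ block
\[
B(\theta,\alpha)=\begin{pmatrix}\alpha(\alpha-1)h & (\alpha-1)h' \\ (\alpha-1)h' & h''+\alpha h\end{pmatrix}
\]
in the $(e_r,e_\theta)$–plane and the scalar $\alpha h(\theta)+\cot\theta\,h'(\theta)$ repeated $n-2$ times in the azimuthal block. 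By positive homogeneity of $\mathcal M^+_{\lambda,\Lambda}$ and $r^{\alpha-2}>0$, the desired inequality is equivalent to
\[
\mathcal M^+_{\lambda,\Lambda}(M(\theta,\alpha))=\mathcal M^+_{\lambda,\Lambda}(B(\theta,\alpha))+(n-2)\,\mathcal M^+_{\lambda,\Lambda}\!\big(\alpha h+\cot\theta\,h'\big)\le-\eta
\]
for every $\theta\in(0,\theta_0)$ and $|\alpha|<\mu$, for suitable $\eta,\mu>0$.

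\textbf{Step 2: structure of the block.} I will arrange $h''<0$ with $|h''|$ bounded below; then $\operatorname{tr}B=h''+\alpha^2h<0$, so $B$ has at most one positive eigenvalue $s$ (set $s=0$ if there is none), and the elementary bound $s\le(c^2-ab)^+/|a+b|$ for the entries $a,b,c$ of $B$ gives $s\le(h')^2/|h''|$ up to $O(|\alpha|)$. Hence
\[
\mathcal M^+_{\lambda,\Lambda}(B)=(\Lambda-\lambda)s+\lambda\operatorname{tr}B\le(\Lambda-\lambda)\frac{(h')^2}{|h''|}+\lambda h''+O(|\alpha|),
\]
while, if $h'\le0$, the azimuthal scalar is $\le0$ on $(0,\pi/2)$, so its only non–negligible contribution to $\mathcal M^+_{\lambda,\Lambda}(M)$ is $(n-2)\Lambda\,|\cot\theta|\,|h'|$ on $(\pi/2,\theta_0)$, which is finite since $\theta_0<\pi$. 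Thus it suffices to find a positive $h$ with $h'\le0$, $h''<0$, for which $\lambda|h''|$ dominates both $(\Lambda-\lambda)(h')^2/|h''|$ and $(n-2)\Lambda|\cot\theta_0|\,|h'|$, uniformly on $[0,\theta_0]$.

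\textbf{Step 3: the barrier profile.} I would take, for a large constant $\beta=\beta(\lambda,\Lambda,n,\theta_0)$ and a constant $A=A(\beta,\theta_0)$,
\[
h(\theta)=A-\beta^{-2}\big(e^{\beta\theta}-1-\beta\theta\big),\qquad h'(\theta)=-\beta^{-1}\big(e^{\beta\theta}-1\big),\qquad h''(\theta)=-e^{\beta\theta}.
\]
Then $h$ extends $C^2$ up to $\theta=0$ with $h'(0)=0$ (so $\cot\theta\,h'$ extends continuously at $\theta=0$, with value $h''(0)=-1$, removing the only potential singularity), $h''\le-1<0$, $|h'/h''|\le1/\beta$, and $A$ large makes $h\ge\delta>0$ on $[0,\theta_0]$. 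Substituting into Step~2, the terms of order $e^{\beta\theta}$ collect to yield
\[
\mathcal M^+_{\lambda,\Lambda}(M(\theta,\alpha))\le\Big(\tfrac{C_1(\Lambda-\lambda)}{\beta^2}+\tfrac{C_2(n)\Lambda\,|\cot\theta_0|}{\beta}-\lambda\Big)e^{\beta\theta}+C_3\,|\alpha|
\]
for $|\alpha|\le1$, with $C_1,C_2$ numerical and $C_3=C_3(\beta,\theta_0,n,\lambda,\Lambda)$. Fixing $\beta$ so large that the bracket is $\le-\lambda/2$ and using $e^{\beta\theta}\ge1$ gives $\mathcal M^+_{\lambda,\Lambda}(M(\theta,\alpha))\le-\lambda/2+C_3|\alpha|$, and then $\mu:=\min\{1,\lambda/(4C_3)\}$ yields $\mathcal M^+_{\lambda,\Lambda}(M(\theta,\alpha))\le-\lambda/4$ for all $\theta\in(0,\theta_0)$ and $|\alpha|<\mu$. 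This proves the statement with $\eta=\lambda/4$. (When $n=2$ the azimuthal block is absent and the $|\cot\theta_0|$ term is simply dropped.)

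\textbf{The main obstacle.} The delicate point is that $\mathcal M^+_{\lambda,\Lambda}$ penalizes positive eigenvalues by $\Lambda$, and the radial–meridional coupling entry $(\alpha-1)h'$ of the Hessian always creates one positive eigenvalue of size comparable to $(h')^2/|h''|$; for a naive angular profile (e.g.\ $h''$ constant, or $h$ a principal Dirichlet eigenfunction of the spherical cap) this lies on the same scale as the good term $\lambda|h''|$ throughout $[0,\theta_0]$, forcing an artificial restriction such as $\theta_0\lesssim\sqrt{\lambda/(\Lambda-\lambda)}$. The exponential profile is chosen precisely to make $|h''|$ large relative to $|h'|$ uniformly on an interval of \emph{any} length $<\pi$, which decouples the two scales; with that in hand, the perturbation in $\alpha$ and the endpoint analysis ($\theta\to0^+$ via $h'(0)=0$, $\theta\to\theta_0^-$ via $\theta_0<\pi$) are routine.
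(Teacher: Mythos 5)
The paper does not prove Theorem \ref{baaa} at all: it is quoted verbatim from Miller's Theorem 2 in \cite{miller1967barriers} (see also \cite{banerjee2015boundary,crandall1999existence}), so there is no internal argument to compare against. Your proposal is a self-contained reconstruction, and its outline is sound: the spherical-frame Hessian formula for $v=r^{\alpha}h(\theta)$ is correct (the block $B(\theta,\alpha)$ and the azimuthal entry $\alpha h+\cot\theta\,h'$ are exactly right, and $\mathcal M^{+}_{\lambda,\Lambda}$ is indeed additive over the blocks since the spectrum is the union of the block spectra); the bound $s_{+}\le (c^{2}-ab)^{+}/|a+b|$ for the one possible positive eigenvalue of a $2\times2$ symmetric matrix with negative trace is correct; and you have correctly identified the real enemy, namely the off-diagonal entry $(\alpha-1)h'$ which forces a positive eigenvalue of size $\approx (h')^{2}/|h''|$ weighted by $\Lambda$. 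The exponential profile $h''=-e^{\beta\theta}$ with $\beta$ large is a legitimate way to make $\lambda|h''|$ dominate both $(\Lambda-\lambda)(h')^{2}/|h''|$ and the azimuthal term $(n-2)\Lambda|\cot\theta_{0}|\,|h'|$ uniformly on any $[0,\theta_{0}]$ with $\theta_{0}<\pi$, and the condition $h'(0)=0$ correctly removes the apparent singularity of $\cot\theta\,h'$ on the cone's axis. Two points deserve explicit care in a full write-up: (i) the order of quantifiers — the claim $\operatorname{tr}B=h''+\alpha^{2}h<0$ and the $O(|\alpha|)$ absorptions require $|\alpha|$ small relative to $A=\sup h\sim\beta^{-2}e^{\beta\theta_{0}}$, so $\mu$ must be chosen after $\beta$ and $A$ are fixed (your final $\mu=\min\{1,\lambda/(4C_{3})\}$ should be intersected with this smallness); and (ii) the $C^{2}$ regularity of $v$ across the axis $\{\theta=0,\ r>0\}\subset\operatorname{int}(C_{\theta_{0},R})$, which for your $h$ holds because $h(\theta)-h(0)-\tfrac12h''(0)\theta^{2}=O(\theta^{3})$ and $\theta^{3}$ is $C^{2}$ in $x$ off the origin. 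The main difference from Miller's route is that Miller solves an angular ODE designed to produce the optimal (largest) admissible exponent $\mu$, whereas your argument produces some positive $\mu$ by brute domination; since the paper only needs existence of positive $\eta$ and $\mu$, your simpler construction suffices for the application.
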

There are some remarks:
\begin{itemize}
\item{} If $\alpha>0$ is positive in the above definition then the function $v$ is called regular barrier. While if $\alpha<0$ then the barrier is called singular.
\item{} At this juncture, we would also like to point out that the similar barrier also works for degenerate fully nonlinear elliptic operator. For the details see, section 3.1\cite{birindelli2009eigenvalue}.
\item{} We by choosing $R$ and $\mu$ further small in the above theorem we can show that similar statement also hold if we replace $\mathcal{M}_{\lambda, \Lambda}^+$ by more general elliptic operator with bounded lower order terms. For example $\mathscr{M}_{t}$ defined below. For the details in the case of linear elliptic equation we refer to Theorem 10.8.5, Lemma10.8.6 and Corollary 10.8.7\cite{helms2009potential}.
\item{} By examining the expression of $v$, we can also show that there are positive constants $C_{1}, C_{2},$... $C_{5}$, and an $r_{0} > 0$ such that all the conditions (i)-(v) of the Definition \ref{reff} hold if $z=0,$ that is $h_{0}(x)=v(x).$ Here we have assumed that $(0,t)$ lies on the parabolic boundary of the cylinder $\Omega_{T}.$ Moreover, as the class of equations considered here are invariant under rotation and translation so the similar barrier can be obtained by translating the vertex of the cone and corresponding function.
\end{itemize}
\subsection{Family of Elliptic Operators and associated barriers}
For each fixed $t \in (0, T)$, consider the following family of elliptic operators:
\[ \mathscr{M}_{t} = \mathcal{M}_{\lambda, \Lambda}^+(D^{2}(\cdot)) + \sum_{i=1}^{N} b_{i}(\cdot, t) \frac{\partial}{\partial x_{i}} + c(\cdot, t), \]
where $|b(x, t)| \leq K$ and $c(x, t) \leq 0$ for all $(x, t) \in \Omega \times (0, T)$. Let $E \subset \partial \Omega$ and $\{h_{z}\}$ be a family of functions indexed by points $z \in E$.

\begin{definition}\label{reff}
The family of functions $\{h_{z}\}$ is called a family of uniformly strong local barriers of order $\mu$ for the family of $\{\mathscr{M}_{t}\}$ in $(0, T)$ in $\Omega$ at $E$ if for some positive constants $r_{0}, C_{1}, C_{2}, C_{3}, C_{4}$, the functions $h(y, z):=h_{z}$ satisfy the following conditions:
\begin{enumerate}
\item Each $h_{z}$ is defined and continuous in $\overline{\Omega} \cap \{x \mid 0 < |x - y| < r_{0}\}$ and $C^{2}$ on $\Omega \cap \{x \mid 0 < |x - y| < r_{0}\}$.
\item For $z \in E$ and $y \in \Omega$ with $0 < |y - z| < r_{0}$,
\begin{equation}
C_{1} |y - z|^{\mu} \leq h(y, z) \leq C_{2} |y - z|^{\mu},
\end{equation}
\item $|D_{i} h(y, z)| \leq C_{3} |y - z|^{\mu - 1},$ for $i = 1, \ldots, n,$
\item $|D_{ij} h(y, z)| \leq C_{4} |y - z|^{\mu - 2},$
\item $\mathscr{M}_{t} h(y, z) \leq -C_{5} |y - z|^{\mu - 2},$ for $t \in (0, T).$
\end{enumerate}
\end{definition}

Now let us consider the operator $\mathscr{L} = -\frac{\partial u}{\partial t} + \mathcal{M}_{\lambda, \Lambda}^+(D^2u) + b(x,t) \cdot Du + c(x,t)u,$ where $|b| \leq K$ and $c \leq 0.$

\begin{theorem}\label{sinb}
Let $\mathscr{L}$ be the operator as given above in $\Omega_{T}$ and let $E \subset \partial \Omega$, with a point $z_{0} \in E.$ Suppose there exists a family of uniformly strong local singular barriers of order $-\mu$ for the family of elliptic operators \{$\mathscr{M}_{t}\}$ $t \in (0, T)$ in $\Omega$ at $E$ and there exists a strong local regular barrier of order $\eta$ for the same class of uniformly elliptic operators in $\Omega$ at $z_{0}.$ Suppose also that $\mathscr{H}(E) < \mu.$ Let $u \in C^{2,1}(\Omega_{T}).$ Suppose also that for some non-negative constant $M$ we have 
\begin{equation}\label{l}
u(x,t) \geq -L, \quad \mathscr{L} u(x,t) \leq L \quad \text{for} \quad (x,t) \in \Omega_{T}.
\end{equation}
Then for $0 < t_{0} < T,$ $u(z_{0}, t_{0}) \geq 0$ follows provided the boundary condition 
\begin{equation}\label{3.6}
u \geq 0 \quad \text{on} \quad (\partial \Omega \setminus E) \times (0, T) \quad \text{near} \quad (z_{0}, t_{0})
\end{equation}
is fulfilled.
\end{theorem}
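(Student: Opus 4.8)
The plan is to mimic the base-boundary argument of Theorem \ref{bmain}, but now working in the elliptic ``slices'' $\Omega\times\{t\}$ with the singular barriers $h_z$ playing the role that the Gaussian-type function $\psi$ played before, and the regular barrier at $z_0$ playing the role of $\phi$. First I would use translation invariance to assume $z_0=0$ and pick, from the boundary hypothesis \eqref{3.6}, a parabolic ball $\mathscr{B}_\rho\subset\mathbb{R}^{n+1}$ centered at $(0,t_0)$ such that $u\ge 0$ on $\overline{\mathscr{B}_\rho}\cap\big[(\partial\Omega\setminus E)\times(0,T)\big]$ and such that $\mathscr{B}_\rho$ stays inside the region $0<|x|<r_0$ where the barriers are defined; shrinking $\rho$ further I can also assume that the time interval of $\mathscr{B}_\rho$ is bounded away from $0$ and from $T$, so that the coefficient bounds $|b|\le K$, $c\le 0$ and the barrier estimates (i)--(v) of Definition \ref{reff} hold uniformly on $\mathscr{B}_\rho\cap\Omega_T$. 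Next I would set $\delta=(\mu-\mathscr{H}(E))/2>0$, invoke Lemma \ref{house} with exponent $\mu-\delta$ to get, for each $\varepsilon>0$ and each small $\nu>0$, a covering $\{B_i\}$ of $E$ by balls in $\mathbb{R}^n$ with centers $z_i\in E$, radii $0<r_i\le\nu$, and $\sum_i r_i^{\,\mu-\delta}<\varepsilon$.

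The comparison function I would build is
\[
w(x,t)=u(x,t)+A\,g(x)+\sum_i r_i^{\,\mu-\delta}\,h(x,z_i),
\]
where $g$ is the regular barrier of order $\eta$ at $0$ (rescaled so that $A\,g\ge L$ on the ``outer'' part $|x|^2+(t-t_0)^2=\rho^2$ of $\partial_{\mathscr P}\mathscr{B}_\rho$, using $g(x)\ge C_1|x|^\eta$) and $h(x,z_i)=h_{z_i}(x)$ are the singular barriers of order $-\mu$. The key point is that $h(x,z_i)\ge C_1|x-z_i|^{-\mu}$, so on the lateral boundary portion inside a ball $B_i$ one has $r_i^{\,\mu-\delta}h(x,z_i)\ge C_1 r_i^{\,\mu-\delta}\,r_i^{-\mu}=C_1 r_i^{-\delta}\ge C_1\nu^{-\delta}$, which I arrange to exceed $L$ by choosing $\nu$ small; this is exactly the Case III estimate. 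The upper bounds (iii)--(iv) in Definition \ref{reff}, together with $\sum_i r_i^{\,\mu-\delta}<\varepsilon$ and $|x-z_i|\ge$ const on compact subsets away from $E$, give locally uniform convergence of the series and of its first two space derivatives and first time derivative, so $w\in C^{2,1}$ there and may be differentiated termwise; and since $\mathscr{L}$ acts only through the elliptic part $\mathscr{M}_t$ on the $t$-independent barriers, condition (v) gives $\mathscr{M}_t h(\cdot,z_i)\le -C_5|x-z_i|^{\mu-2}<0$, while for $g$ we use Theorem \ref{baaa} (regular barrier) plus the third bullet after it to get $\mathscr{M}_t g\le 0$ near $0$; combined with $\mathscr{L}u\le L$ this yields $\mathscr{L}w<0$ on $\mathscr{B}_\rho\cap\Omega_T$ after possibly shrinking $\rho$ once more to absorb the constant $L$.

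Then, exactly as before, I would form the open set $A=\mathscr{B}_\rho\cap\Omega_T$, the union of cylinders/paraboloids $P=\bigcup_i P_i$ with $P_i=\{(x,t):|x-z_i|<r_i,\ |t-t_0|<r_i^2\}$ covering $E\times(t_0-r_i^2,t_0+r_i^2)$, and apply the parabolic minimum principle (the Comparison Principle of Section 2, which holds since $c\le 0$) to $w$ on $A\setminus\overline P$. The boundary $\partial_{\mathscr P}(A\setminus\overline P)$ splits into three pieces: the outer parabolic boundary of $\mathscr{B}_\rho$ (handled by the choice of $A$ and $g$), the part of $(\partial\Omega\setminus E)\times(0,T)$ inside $\mathscr{B}_\rho$ (where $u\ge0$ by \eqref{3.6}, hence $w\ge0$), and $A\cap\partial P$, where — using that each point lies on $\partial P_i$ for some $i$ since $r_i\to0$ — the singular-barrier estimate above gives $w\ge -L+C_1\nu^{-\delta}\ge0$. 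Hence $w\ge0$ on $A\setminus\overline P$. Finally, since $\sum_i r_i^{\,\mu-\delta}h(x,z_i)\le C_2\varepsilon$ on $\mathscr{B}_\rho\cap\Omega_T$ minus a neighborhood of $E$, letting $\nu\to0^+$ (so $P$ shrinks to $E\times\{t_0\}$) and then $\varepsilon\to0$ gives $u+A g\ge0$ on $\mathscr{B}_\rho\cap\Omega_T$, and since $g(0)=0$ we conclude $u(0,t_0)\ge0$. The main obstacle I anticipate is the termwise-differentiation / uniform-convergence bookkeeping for the series $\sum_i r_i^{\,\mu-\delta}h(x,z_i)$: unlike the entire function $\psi$, the barriers $h_{z_i}$ are only defined on a punctured neighborhood of $z_i$ and only estimated via (i)--(v), so one must check carefully that on a fixed compact $K\Subset\mathscr{B}_\rho\cap\Omega_T$ all but finitely many $z_i$ are bounded away from $K$ (using $r_i\to0$ and that the $z_i$ accumulate only on $E\subset\partial\Omega$), and then bound the tail of the series and its derivatives by $C\,\varepsilon\cdot(\operatorname{dist}(K,E))^{-\mu-2}$ or the like; handling the points $z_i$ that are close to $K$ requires that $K$ itself avoid a neighborhood of $E$, which is why the final limit $\nu\to0$ must be taken before restricting attention to any interior point.
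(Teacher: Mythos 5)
Your overall architecture matches the paper's: cover $E$ by balls with $\sum_i r_i^{\mu-\delta}<\varepsilon$ where $\delta=(\mu-\mathscr{H}(E))/2$, form $w=u+(\text{regular barrier})+\sum_i r_i^{\mu-\delta}h(\cdot,z_i)$, verify $\mathscr{L}w<0$, apply the minimum principle on the domain minus a neighborhood of the exceptional set, and pass to the limit. However, there is a genuine gap in your treatment of the outer boundary. You work in a Euclidean ball $\mathscr{B}_\rho\subset\mathbb{R}^{n+1}$ centered at $(z_0,t_0)$ and claim that $A\,g\ge L$ on the sphere $|x-z_0|^2+(t-t_0)^2=\rho^2$ because $g(x)\ge C_1|x-z_0|^\eta$. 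This fails: on that sphere $|x-z_0|$ can be arbitrarily small (with $|t-t_0|$ close to $\rho$), so no fixed rescaling $A$ makes $A\,g\ge L$ there; at such points the singular sum is also controlled from above by $C_2\varepsilon\,[\mathrm{dist}(x,E)]^{-\mu}$, which you need to be small for the final limit, so nothing in your $w$ dominates $-L$. In Theorem \ref{bmain} this portion of the sphere was handled because $\phi(x,t)=t^{1-\beta}+(1+t^\beta)|x|^2$ contains a \emph{time} term; here the regular barrier $h^r$ depends on $x$ alone. The paper's fix is to replace the ball by the cylinder $S=\{|x-z_0|<r,\ |t-t_0|<s\}$ and to add the term $\frac{L}{s^2}(t-t_0)^2$ to $w$, so that on the top and bottom faces $|t-t_0|=s$ this term alone yields $w\ge -L+L=0$, while on the lateral face $|x-z_0|=r$ the regular barrier does the job (the extra contribution $-\frac{2L}{s^2}(t-t_0)$ to $\mathscr{L}w$ is then absorbed by shrinking $r$). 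Your proof cannot close without some such time-dependent term.

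A second, more minor, defect: your sets $P_i=\{|x-z_i|<r_i,\ |t-t_0|<r_i^2\}$ are truncated in time, so $\overline{P}$ does not cover $E\times(0,T)$; consequently $\partial_{\mathscr{P}}(A\setminus\overline P)$ contains points of $E\times(0,T)$ with $|t-t_0|\ge r_i^2$, a fourth boundary piece absent from your three-case split, and there $u\ge 0$ is not assumed. The paper avoids this by taking $P=(\cup_iB_i)\times\mathscr{R}$, i.e.\ full cylinders in time. (This particular issue is repairable even with your $P_i$, since for $x$ approaching any $z\in E$ one has $|x-z_j|\le 2r_j$ for the ball $B_j$ containing $z$, whence $r_j^{\mu-\delta}h(x,z_j)\ge C_1 2^{-\mu}r_j^{-\delta}\ge C_1 2^{-\mu}\nu^{-\delta}\ge L$ independently of $t$; but you must state and use this.) The remaining steps — termwise differentiation of the series via conditions (iii)--(v) of Definition \ref{reff}, the Case III estimate $-L+C_1\nu^{-\delta}\ge 0$ on $\partial P$, and the double limit $\nu\to0$, $\varepsilon\to0$ followed by $h^r(x)\to0$ as $x\to z_0$ — agree with the paper.
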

\begin{remark}
The assumption regarding the existence of regular and singular barrier of order $\mu$ in the statement of above theorem is justified by the remark below Theorem \ref{baaa}.
\end{remark}
\begin{proof}
Let $h^{r} = h(\cdot, z_{0})$ and $h^{s} = h(\cdot, z_{0})$ be the regular and singular barriers of order $\eta$ and $-\mu$ at $z_{0} \in E$ in $\Omega$, respectively. In view of \eqref{3.6}, there exists an open cylinder $S$, say,
\[ S = \{(x,t) \in \mathbb{R}^{n+1} \mid |x - z_{0}| < r, \ |t - t_{0}| < s\} \]
such that 
\begin{equation}\label{later}
u \geq 0 \quad \text{on} \quad \left[ \left(\partial \Omega \setminus E\right) \times (0, T) \right] \cap \overline{S}.
\end{equation}

Define $\delta = [\mu - \mathscr{H}(E)] / 2$ and choose $\epsilon_{1}$ sufficiently small such that 
\begin{equation}\label{13}
0 < \epsilon_{1} < r \quad \text{and} \quad C_{1} \epsilon_{1}^{-\delta} \geq L.
\end{equation}

By the definition of the Hausdorff dimension, for given $\epsilon_{1} > 0$ and $\epsilon_{2} > 0$, we can find a covering of $E$ by balls $\{B_{r_{i}}(z_{i})\}$, where $z_{i} \in E$ and $0 < r_{i} \leq \epsilon_{1}$, such that
\begin{equation}\label{epsilon2}
\sum_{i} r_{i}^{\mu - \delta} < \epsilon_{2}.
\end{equation}

We may discard all the balls that do not intersect with $E \cap B_{r}(z_{0})$. Now for $(x,t) \in S \cap \Omega_{T}$ and each $i$,
\[ |x - z_{i}| \leq |x - z_{0}| + |z_{0} - z_{i}| \leq 2r + \epsilon_{1} < 3r < r_{0}. \]
Moreover, $|x - z_{0}| \leq r < r_{0}$. Therefore, $h^{r}(x)$ and all $h^{s}(x, z_{i})$ are defined and $C^{2,1}$ functions in $S \cap \Omega_{T}$. Furthermore, for any $(x,t) \in S \cap \Omega_{T}$, we also have 
\[ 0 < \text{dist}(x, E) \leq |x - z_{i}| < r_{0}. \]
Therefore, in view of \eqref{epsilon2} and conditions $(1)-(5)$ in the definition of the singular barrier, we find that the function   
\begin{equation}\label{w}
w(x,t) := u(x,t) + \left(1 + \frac{L}{C_{1} r^{\eta}}\right) h^{r}(x) + \sum_{i} r_{i}^{\mu - \delta} h^{s}(x, z_{i}) + \frac{L}{\eta^{2}} (t - t_{0})^{2}
\end{equation}
is well defined. Moreover, in view of \eqref{epsilon2}, we find that the termwise twice-differentiated series converges locally uniformly on $S \cap \Omega_{T}$. Therefore, this function is $C^{2,1}$ in $S \cap \Omega_{T}$. Furthermore, in view of \eqref{w}, \eqref{l}, and $(5)$ in the definition of the singular barrier, and the properties of the Pucci extremal operators, we have 
 \begin{equation*}
\begin{aligned}
\mathscr{L}(w) \leq \mathscr{L}(u)(x,t) + \mathscr{L}(g)(x) - \frac{2L}{s^{2}}(t - t_{0}) < 0.
\end{aligned}
\end{equation*}

Let us set $\hat{S} := S \cap \Omega_{T}$ and $P = \left(\cup_{i} B_{i}\right) \times \mathscr{R}$. Observe that $\hat{S} \setminus \overline{P}$ is nonempty provided $\epsilon_{1}$ is sufficiently small. In fact, any $(x,t) \in \hat{S}$ lies in $\hat{S} \setminus \overline{P}$ if $\epsilon_{1} < \text{dist}(x, E)$. To prove the claim, we first show that $w \geq 0$ on $\hat{S} \setminus \overline{P}$. Since $w$ is a supersolution, we need to show that $w \geq 0$ on the boundary of $\hat{S} \setminus \overline{P}$. Let us divide the boundary of $\hat{S} \setminus \overline{P}$ into the following three parts:
\[ \partial (\hat{S} \setminus \overline{P}) \subset (\partial S \cap \overline{\Omega_{T}} \setminus P) \cup \left(S \cap \partial \Omega_{T} \setminus P\right) \cup \left(\hat{S} \cap \partial P\right). \]
Let us treat the above three cases separately:
\begin{enumerate}
\item On $\partial S \cap \overline{\Omega_{T}} \setminus P$, we have either $|x - z_{0}| = r$ or $|t - t_{0}| = s$. If $|x - z_{0}| = r$, then by the definition of $w$, \eqref{l}, and condition $(1)$ in the definition of the barrier, and $|t - t_{0}| \leq s$, we have 
\[ w(x,t) \geq -L + \frac{L}{C_{1} r^{\eta}} C_{1} |x - z_{0}|^{\eta} = 0. \]
On the other hand, if $|t - t_{0}| = s$, again by the definition of $w$, \eqref{l}, and condition $(1)$ in the definition of the barrier, we have
\[ w(x,t) \geq -L + \frac{L}{s^{2}} s^{2} = 0. \]
\item Since $\hat{S} \cap \partial P \subset [\left(\partial \Omega \setminus E\right) \times (0, T)] \cap \overline{S}$, so $w \geq 0$ follows by \eqref{later}. 
\item Let $x \in \hat{S} \cap \partial P$, then we have $\text{dist}(x, E) > 0$. But the radius of the balls $B_{r_{i}}(z_{i})$ converges to zero by \eqref{epsilon2}, therefore $x \in \overline{B_{j}}$ for at most a finite number of $j$, say $N$. Therefore, we have 
\[ x \in \partial \left(\cup_{i=1}^{N} B_{i}\right) \subset \cup_{i=1}^{N} (\partial B_{i}), \]
which implies $x \in \partial B_{k}$ for some $k$. Then again by the definition of $w$, \eqref{l}, condition $(1)$, \eqref{13}, and $r_{j} \leq \epsilon_{1}$, we have 
\begin{align*}
w(x,t) &\geq u(x,t) + r_{k}^{\mu - \delta} h^{s}(x, z_{k}) \\
&\geq -L + r_{k}^{\mu - \delta} C_{1} |x - y_{k}|^{-\mu} \\
&\geq -L + C_{1} r_{k}^{-\delta} \geq -L + C_{1} \epsilon_{1}^{-\delta} \geq 0.
\end{align*}
\end{enumerate}

Thus, the minimum principle implies that $w \geq 0$ in $S \cap \Omega_{T} \setminus \overline{P}$. By condition $(1)$ in the definition of the barrier and \eqref{l}, the series defining $w$ can be bounded in $S \cap \Omega_{T}$ as follows:
\[ \sum_{i} r_{i}^{\mu - \delta} \leq \sum_{i} r_{i}^{\mu - \delta} C_{2} |x - z_{i}|^{-\mu} \leq \epsilon_{2} C_{2} [\text{dist}(x, E)]^{-\mu}; \]
Therefore, $w \geq 0$ implies 
\[ u(x,t) + \left(1 + \frac{L}{C_{1} s^{\eta}}\right) h^{r}(x) + \frac{L}{s^{2}} (t - t_{0})^{2} \geq -\epsilon_{2} C_{2} [\text{dist}(x, E)]^{-\mu}. \]
Now, we shrink $P$ by taking $\epsilon_{1} \rightarrow 0$ while preserving \eqref{13}. Since $\epsilon_{1}$ is arbitrary, we obtain that in $S \cap \Omega_{T}$, 
\[ u(x,t) + \left(1 + \frac{L}{C_{1} s^{\eta}}\right) h^{r}(x) + \frac{L}{s^{2}} (t - t_{0})^{2} \geq 0. \]
Now by using $\liminf_{z \rightarrow z_{0}} h^{r}(z) = 0$, we obtain $\liminf u \geq 0$ as $(x,t) \rightarrow (z_{0}, t_{0})$ in $S \cap \Omega_{T}.$
\end{proof}

By applying the above theorem to all the points of $E \times (0, T)$, we have the following corollary.

\begin{corollary}
If in Theorem \ref{sinb} the hypothesis \eqref{3.6} is replaced by  
\(u \geq 0 \quad \text{on} \quad (\partial \Omega \setminus E) \times (0, T) \quad \text{near} \quad E \times (0, T),\)
then we have $u \geq 0 ~ \text{on} ~ E \times (0, T).$
\end{corollary}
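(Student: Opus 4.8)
The plan is to derive the corollary from Theorem~\ref{sinb} by applying that theorem separately at every point of $E\times(0,T)$. Fix an arbitrary $(z_0,t_0)$ with $z_0\in E$ and $0<t_0<T$; since such points exhaust $E\times(0,T)$, it suffices to show $u(z_0,t_0)\geq 0$ (in the sense of the $\liminf$ taken from inside $\Omega_T$).

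First I would check that the hypotheses of Theorem~\ref{sinb} hold at this particular point. The one-sided bound $u\geq -L$ and the growth bound $\mathscr{L}u\leq L$ in \eqref{l}, as well as the dimension bound $\mathscr{H}(E)<\mu$, are global assumptions and are unaffected by the choice of point. The family $\{h_z\}_{z\in E}$ of uniformly strong local singular barriers of order $-\mu$ for $\{\mathscr{M}_t\}$ at $E$ is, by Definition~\ref{reff}, attached to the whole set $E$, so it is available no matter which point of $E$ we single out. The only genuinely point-dependent ingredient is the strong local regular barrier of order $\eta$ at $z_0$: here one invokes the remarks following Theorem~\ref{baaa}, which observe that the class of equations considered is invariant under translations and rotations, so Miller's regular barrier can be transported from one boundary point to any other (under the same exterior-cone condition); hence such a barrier exists at every $z_0\in E$.

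Next I would verify the localized boundary condition \eqref{3.6}. By hypothesis $u\geq 0$ on the intersection of $(\partial\Omega\setminus E)\times(0,T)$ with some neighborhood $\mathscr{U}$ of $E\times(0,T)$. Since $(z_0,t_0)\in E\times(0,T)\subset\mathscr{U}$, the set $\mathscr{U}$ is in particular a neighborhood of $(z_0,t_0)$, so $u\geq 0$ on $(\partial\Omega\setminus E)\times(0,T)$ near $(z_0,t_0)$, which is precisely \eqref{3.6}. Theorem~\ref{sinb} then gives $u(z_0,t_0)\geq 0$; letting $(z_0,t_0)$ range over $E\times(0,T)$ yields $u\geq 0$ on $E\times(0,T)$.

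The main point requiring care — and the only real obstacle, albeit a minor one — is the passage from a regular barrier at a single distinguished point to one at every point of $E$; this is what forces the appeal to the translation/rotation invariance of the operator, together with whatever uniform regularity of $\partial\Omega$ (an exterior cone condition along $E$) was implicitly used in Miller's construction of the barrier. Everything else is a direct, point-by-point invocation of the theorem already proved.
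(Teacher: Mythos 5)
Your proposal is correct and coincides with the paper's argument, which simply applies Theorem~\ref{sinb} pointwise to every $(z_0,t_0)\in E\times(0,T)$; your additional verification that the global hypotheses, the barrier family, and the localized boundary condition \eqref{3.6} all carry over to each point is exactly the (unstated) content of that one-line proof.
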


\bibliography{ref.bib}

\begin{thebibliography}{10}

\bibitem{banerjee2015boundary}
A.~Banerjee and N.~Garofalo.
\newblock Boundary behavior of nonnegative solutions of fully nonlinear
  parabolic equations.
\newblock {\em Manuscripta Mathematica}, 146:201--222, 2015.

\bibitem{birindelli2009eigenvalue}
I.~Birindelli and F.~Demengel.
\newblock Eigenvalue and dirichlet problem for fully-nonlinear operators in
  non-smooth domains.
\newblock {\em Journal of mathematical analysis and applications},
  352(2):822--835, 2009.

\bibitem{carleson1967selected}
L.~Carleson.
\newblock {\em Selected Problems on Exceptional Sets}.
\newblock Mathematical studies,13. Van Nostrand, 1967.

\bibitem{chabrowski2006dirichlet}
J.~Chabrowski.
\newblock {\em The Dirichlet problem with L2-boundary data for elliptic linear
  equations}.
\newblock Springer, 2006.

\bibitem{crandall1999existence}
M.~G. Crandall, M.~Kocan, P.-L. Lions, and A.~Swiech.
\newblock Existence results for boundary problems for uniformly elliptic and
  parabolic fully nonlinear equations.
\newblock 1999.

\bibitem{AF1}
A.~Cutr\`\i and F.~Leoni.
\newblock On the {L}iouville property for fully nonlinear equations.
\newblock {\em Ann. Inst. H. Poincar\'{e} Anal. Non Lin\'{e}aire},
  17(2):219--245, 2000.

\bibitem{dolcetta2007qualitative}
I.~C. Dolcetta and A.~Vitolo.
\newblock A qualitative phragm{\'e}n--lindel{\"o}f theorem for fully nonlinear
  elliptic equations.
\newblock {\em Journal of Differential Equations}, 243(2):578--592, 2007.

\bibitem{evans2018measure}
L.~Evans.
\newblock {\em Measure theory and fine properties of functions}.
\newblock Routledge, 2018.

\bibitem{gauidenko1981exceptional}
S.~Ga{\u\i}denko.
\newblock On exceptional sets on the boundary and the uniqueness of solutions
  of the dirichlet problem for a second order elliptic equation.
\newblock {\em Mathematics of the USSR-Sbornik}, 39(1):107, 1981.

\bibitem{gilbarg1952phragmen}
D.~Gilbarg.
\newblock The phragm{\'e}n-lindel{\"o}f theorem for elliptic partial
  differential equations.
\newblock {\em Journal of Rational Mechanics and Analysis}, 1:411--417, 1952.

\bibitem{heinonen2018nonlinear}
J.~Heinonen, T.~Kipelainen, and O.~Martio.
\newblock {\em Nonlinear potential theory of degenerate elliptic equations}.
\newblock Courier Dover Publications, 2018.

\bibitem{helms2009potential}
L.~Helms.
\newblock {\em Potential Theory}.
\newblock Universitext. Springer London, 2009.

\bibitem{hile1986exceptional}
G.~Hile and R.~Yeh.
\newblock Exceptional boundary sets for solutions of elliptic partial
  differential inequalities.
\newblock {\em Indiana University mathematics journal}, 35(3):611--621, 1986.

\bibitem{hile1988exceptional}
G.~Hile and R.~Yeh.
\newblock Exceptional boundary sets for solutions of parabolic partial
  differential inequalities.
\newblock {\em Transactions of the American Mathematical Society},
  306(2):607--621, 1988.

\bibitem{hopf1952remarks}
E.~Hopf.
\newblock Remarks on the preceding paper by d. gilbarg.
\newblock {\em Journal of Rational Mechanics and Analysis}, 1:419--424, 1952.

\bibitem{miller1967barriers}
K.~Miller.
\newblock Barriers on cones for uniformly elliptic operators.
\newblock {\em Annali di Matematica Pura ed Applicata}, 76(1):93--105, 1967.

\bibitem{oddson1969phragmen}
J.~Oddson.
\newblock Phragmen-lindel{\"o}f theorems for elliptic equations in the plane.
\newblock {\em Transactions of the American Mathematical Society},
  145:347--356, 1969.

\bibitem{oddson1968some}
J.~K. Oddson.
\newblock Some solutions of elliptic extremal equations in the plane.
\newblock {\em Matematiche (Catania)}, 23:273--289, 1968.

\bibitem{protter2012maximum}
M.~H. Protter and H.~F. Weinberger.
\newblock {\em Maximum principles in differential equations}.
\newblock Springer Science \& Business Media, 2012.

\bibitem{punzo2010phragmen}
F.~Punzo.
\newblock Phragm{\`e}n-lindel{\"o}f principles for fully nonlinearelliptic
  equations with unbounded coefficients.
\newblock {\em Communications on Pure and Applied Analysis}, 9(5):1439--1461,
  2010.

\bibitem{tateyama2020phragmen}
S.~Tateyama.
\newblock The phragm{\'e}n-lindel{\"o}f theorem for lp-viscosity solutions of
  fully nonlinear parabolic equations with unbounded ingredients.
\newblock {\em Journal de Math{\'e}matiques Pures et Appliqu{\'e}es},
  133:172--184, 2020.

\end{thebibliography}
\bibliographystyle{abbrv}
\end{document}